\documentclass[11pt]{amsart}
\usepackage[all]{xy}
\usepackage{amsmath}
\usepackage{amsfonts}
\usepackage{amssymb}
\usepackage{amscd}
\usepackage{amsthm}
\usepackage{latexsym}
\usepackage{amsbsy}

\def\0D{\Delta^{(0)}}
\def\1D{\Delta^{(1)}}

\newcommand{\Kc}{\mathcal{K}}
\newcommand{\Hc}{\mathcal{H}}

\newtheorem{theorem}{Theorem}[section]

\newtheorem{proposition}[theorem]{Proposition}
\newtheorem{lemma}[theorem]{Lemma}
\newtheorem{corollary}[theorem]{Corollary}

\newtheorem{example}[theorem]{Example}

\def\build#1_#2^#3{\mathrel{
\mathop{\kern 0pt#1}\limits_{#2}^{#3}}}
\newcommand{\ns}[1]{~\hspace{-4pt}_{_{{(#1)}}}}
\newcommand{\ps}[1]{~\hspace{-4pt}^{^{(#1)}}}

\parindent 0in

\numberwithin{equation}{section}

 \newcommand{\ie}{{\it i.e.\/}\ }

\def\d{\delta}

\def\ve{\varepsilon}

\def\ot{\otimes}
\def\part{\partial}

\def\text{\hbox}

\def\ot{\otimes}

\def\Hom{\mathop{\rm Hom}\nolimits}

\def\Id{\mathop{\rm Id}\nolimits}

\def\build#1_#2^#3{\mathrel{
\mathop{\kern 0pt#1}\limits_{#2}^{#3}}}

\numberwithin{equation}{section}
\parindent 0in
\newcommand{\comment}[1]{\relax}

\textheight20.5cm
\oddsidemargin.5cm
\evensidemargin1cm
\textwidth15cm


\begin{document}
\title{On Cyclic Cohomology of  $\times$-Hopf algebras}
\author {Mohammad Hassanzadeh}
\curraddr{University of Windsor, Department of Mathematics and Statistics, Lambton Tower, Ontario, Canada.
}
\email{mhassan@uwindsor.ca}
\subjclass[2010]{19D55,  16T05, 11M55}
 \keywords{Cyclic cohomology, Hopf algebras, Noncommutative geometry}

\maketitle
\begin{abstract}
In this paper we study the cyclic cohomology of  certain $\times$-Hopf algebras: universal enveloping algebras, quantum algebraic tori, the Connes-Moscovici $\times$-Hopf algebroids and the Kadison bialgebroids. Introducing  their stable anti Yetter-Drinfeld modules and cocyclic modules, we compute their cyclic cohomology. Furthermore, we provide a pairing for the cyclic cohomology of $\times$-Hopf algebras which generalizes the Connes-Moscovici characteristic map to $\times$-Hopf algebras. This enables us to transfer the $\times$-Hopf algebra cyclic cocycles to algebra cyclic cocycles.

\end{abstract}

\section{ Introduction}
 A notion of $\times$-Hopf algebra which generalizes the one for Hopf algebra was introduced by  Schauenburg in \cite{Sch98}.
It extends the   notion of Hopf algebroid  of  B\"ohm-Szlach\'anyi   \cite{b1}  and many nice examples of J. H. Lu in \cite{Lu}. The notion of Hopf algebroid (in the  commutative case)   first appeared  in Haynes Miller's PhD thesis \cite{miller} and has roots in the works of Frank Adams around 1970.
 One notes that a B\"ohm-Szlach\'anyi Hopf algebroid is not necessarily a Lu Hopf algebroid and vice versa. Some nice quantum groupoids, such as weak Hopf algebras with invertible antipodes and also Khalkhali-Rangipour's para-Hopf algebroids \cite{kr3},  are examples of B\"ohm-Szlach\'anyi Hopf algebroids. Another interesting example is the Connes-Moscovici Hopf algebroid \cite{CM01} which was originally understood as a Lu Hopf algebroid and also satisfies the B\"ohm- Szlach\'anyi axioms. It is known that any B\"ohm-Szlach\'anyi's Hopf algebroid (with invertible antipode) is also a $\times$-Hopf algebra.

\medskip
Cyclic cohomology of Hopf algebras was introduced by Connes-Moscovici  in their ground breaking
work on local index theory \cite{CM98}. Their work  was followed by significant calculations of Hopf cyclic cohomology for quantum groups by  Khalkhali-Rangipour in \cite{kr1},  Kustermans-Rognes-Tuset in \cite{krt} and also  Hadfield-Kr\"ahmer in \cite{hk} and \cite{hk2}.
 In the extended version, the cyclic cohomology of Lu's Hopf algebroid (which is defined in  \cite{CM01} and \cite{kow}) and the cyclic cohomology of
Khalkhali-Rangipour's para- Hopf algebroid [KR3] are defined with
trivial coefficients. 
Cyclic cohomology theory  with  generalized  coefficients   for extended versions of Hopf algebras was first  defined in \cite{bs2} for $\times$-Hopf algebras and later was generalized in \cite{HR1}, \cite{HR2} and \cite{kk}. The authors of \cite{bs3} developed a categorial approach to find cyclic objects  for $\times$-Hopf algebras. Later on, the authors  of \cite{kp}  extended the formalism of Hopf cyclic cohomology to the context of braided categories.

\medskip
The Connes-Moscovici characteristic map  introduced in \cite{CM98}  has many  applications in index theory,  number theory and Hopf cyclic cohomology which   was introduced in \cite{CM01} and \cite{CM00} ( see also  \cite{kay00}).  This characteristic map was  discovered while computing the index of a transversally elliptic operator on a foliation.

The key idea here was that although computing the cyclic cohomology of algebras  is a difficult task, one can compute the Hopf cyclic cohomology of the related symmetry  and then use the characteristic map to transfer the cocycles and therefore information from the Hopf algebra of the symmetry to the algebra in question. Later different extensions of the Connes-Moscovici characteristic  map for Hopf algebras were introduced in \cite{kr3}, \cite{cr}, \cite{ns} and \cite{kay}. Finally,   Kaygun  in \cite{kay04}  proved that all of these different setups produce isomorphic characteristic maps.

\medskip

The current paper is organized as follows. In Section $2$ we recall the basics of $\times$-Hopf algebras, their modules and comodules and especially stable anti-Yetter-Drinfeld (SAYD) modules. Also we introduce two cocyclic modules for the cyclic cohomology of $\times$-Hopf algebras. Furthermore we provide a pairing between the cyclic cohomology of module algebras and module corings under the symmetry of a $\times$-Hopf algebra. Therefore  we obtain a generalization of the Connes-Moscovici characteristic map   for an extended version of Hopf algebras.
In Section $3$ we apply the theoretical results and notions of Section $2$ to four major examples of  $\times$-Hopf algebras: enveloping algebras, quantum algebraic tori, the Connes-Moscovici Hopf algebroids  and the Kadisson bialgebroids.

\bigskip

\textbf{Acknowledgments}: The author would like to thank the  Institut des Hautes \'Etudes Scientifiques, IHES, for its hospitality and financial support during his stay when the majority of  this work was accomplished. Also the author appreciates the Mathematics  and Statistics department of University of Windsor, Canada, where  this work was partially carried out. Finally the author would like to thank the referee for his/her constructive suggestions.
\tableofcontents

\section{Cyclic cohomology of $\times$-Hopf algebras with coefficients}
After recalling the basics of $\times$-Hopf algebras, we introduce two cocyclic modules for computing cyclic cohomology of $\times$-Hopf algebras. At the end of the section, we define a pairing on the cyclic cohomology of $\times$- Hopf algebras.

\subsection{$\times$-Hopf algebras }
In this subsection, we recall the definition and basic properties of  $\times$-Hopf algebras.

Let $R$ be an algebra over the field of complex numbers $\mathbb{C}$. A left bialgebroid $\mathcal{K}$ over $R$  consists of the data $(\Kc, \mathfrak{s}, \mathfrak{t})$. Here $\Kc$ is a $\mathbb{C}$-algebra, $\mathfrak{s}:R\longrightarrow \Kc$ and $\mathfrak{t}:R^{op}\longrightarrow \Kc$ are $\mathbb{C}$-algebra maps such that their ranges commute with one another. In terms of $\mathfrak{s}$ and $\mathfrak{t}$, $\Kc$ can be equipped with a $R$-bimodule structure as follows;
\begin{equation*}
r_{1}\cdot k \cdot r_{2}=\mathfrak{ s}(r_{1})\mathfrak{t}(r_{2})k,
 \end{equation*}
 for all $r_1, r_2\in R$ and $k\in \Kc$. Similarly, $\Kc\ot_R\Kc$ is endowed with a natural $R$-bimodule structure. Also we assume that there are $R$-bimodule maps $\Delta: \Kc\longrightarrow \Kc\ot_R \Kc$ called coproduct and $\ve: \Kc\longrightarrow R$ called counit via which $\Kc$ is a $R$-coring \cite{bwbook}. For the coproduct we introduce the Sweedler summation notation $\Delta(k)=k\ps{1}\otimes_R k\ps{2},$ where implicit summation understood. The data $(\Kc, \mathfrak{s},\mathfrak{t}, \Delta, \ve)$ is called a left $R$-bialgebroid if  the algebra and the coring structures have  the following compatibility axioms for all $k,k'\in\Kc$ and $r\in R$;
\begin{enumerate}
\item[i)] $k\ps{1} \mathfrak{t}(r)\otimes_{R} k\ps{2} = k\ps{1} \otimes_{R} k\ps{2} \mathfrak{s}(r),$
\item[ii)] $\Delta(1_{\Kc})=1_{\Kc} \ot_R 1_{\Kc}$, and $\Delta(kk')=k\ps{1} k'\ps{1}\otimes_R k\ps{2}k'\ps{2},$
\item[iii)] $\varepsilon(1_{\Kc})=1_{R}$ and $\varepsilon(kk')=\varepsilon(k\mathfrak{s}(\varepsilon(k')))$.
\end{enumerate}

 A left $R$-bialgebroid $(\Kc,\mathfrak{s},\mathfrak{t},\Delta,\varepsilon)$, is said to be a left $\times_{R}$-Hopf algebra if the following map
\begin{equation}\label{aa}
\nu: \mathcal{\Kc}\otimes_{R^{op}}\mathcal{\Kc}\longrightarrow \mathcal{\Kc}\otimes_{R}\mathcal{\Kc}, \qquad  k\otimes_{R^{op}}k'\longmapsto k\ps{1}\otimes_{R}k\ps{2}k'
\end{equation}

is bijective. In the domain of the map \eqref{aa}, $R^{op}$-module structures are given by right and left multiplication by $\mathfrak{t}(r)$ for $r\in R$. In the codomain of the map \eqref{aa}, $R$-module structures are given by right multiplication by $\mathfrak{s}$ and $\mathfrak{t}$.
 The maps $\nu$ and $\nu^{-1}$ are both right $\mathcal{\Kc}$-linear. The image of $\nu^{-1}$ is denoted by
 \begin{equation*}
 \nu^{-1}(h\otimes_{R^{op}}1_{\mathcal{H}})=h^{-}\otimes_{R}h^{+}.
  \end{equation*}
  The notation of a $\times$-Hopf algebra extends that of a Hopf algebra. In fact if $\mathcal{\Kc}$ is a bialgebra,  then bijectivity of the map $\nu$ is equivalent to the fact that $\mathcal{\Kc}$ is a Hopf algebra. In this case, the inverse of the map $\nu$ is defined by
  \begin{equation*}
\nu^{-1}(h\otimes 1)= h^{-}\otimes_{R^{op}} h^{+}= h\ps{1}\otimes_{R^{op}}S(h\ps{2}).
 \end{equation*}
  We note that in the bialgebroid structure we have  equal source and target maps $\mathfrak{s}=\mathfrak{t}: \mathbb{C}\longrightarrow \Kc$ given by $c\longmapsto c 1_{\Kc}$. The following lemma \cite{HR1}, which implies some properties of the map $\nu$ will be used in the next subsection.

  \begin{lemma}\label{lemma-proof}
    For any left $\times_R$-Hopf algebra $\Kc$   the following identities hold for all $k\in \Kc$.
    \begin{enumerate}
\item[\rm i)] $k^{-}\ps{1}\ot_R k^-\ps{2}k^+= k\ot_R 1$.
\item[\rm ii)] $k^-\ps{1}\ot_R k^-\ps{2}\ot_{R^{op}} k^+=k\ps{1} \ot_R {k\ps{2}}^-\ot_{R^{op}}{k\ps{2}}^+$
    \end{enumerate}
  \end{lemma}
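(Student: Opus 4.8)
The plan is to derive both identities from a single input: the bijectivity of $\nu$ together with its explicit formula $\nu(k\ot_{R^{op}}k')=k\ps{1}\ot_R k\ps{2}k'$ and the coassociativity of $\Delta$. I recall that $\nu^{-1}\colon\Kc\ot_R\Kc\ra\Kc\ot_{R^{op}}\Kc$ is characterized by $\nu^{-1}(k\ot_R 1_\Kc)=k^{-}\ot_{R^{op}}k^{+}$, and throughout I keep track of which leg of each tensor product carries the $R$-structure (through $\mathfrak{s}$) and which carries the $R^{op}$-structure (through $\mathfrak{t}$), since this is exactly what makes the intermediate expressions well defined.

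Part i) is then immediate: applying $\nu$ to $\nu^{-1}(k\ot_R 1_\Kc)=k^{-}\ot_{R^{op}}k^{+}$ and using $\nu\circ\nu^{-1}=\Id$ gives $k\ot_R 1_\Kc=\nu(k^{-}\ot_{R^{op}}k^{+})=k^{-}\ps{1}\ot_R k^{-}\ps{2}k^{+}$, which is the claimed equality.

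For part ii) I would test both sides against the map $\Id_\Kc\ot_R\nu$ that fixes the first leg and applies $\nu$ to the last two legs. Because $\nu$ is bijective, this map is bijective as well, with inverse $\Id_\Kc\ot_R\nu^{-1}$, so it is injective and it suffices to compare the images of the two sides. Applying $\Id\ot\nu$ to the left-hand side and expanding $\nu$ yields $k^{-}\ps{1}\ot_R (k^{-}\ps{2})\ps{1}\ot_R (k^{-}\ps{2})\ps{2}k^{+}$, which by coassociativity of $\Delta$ equals $k^{-}\ps{1}\ot_R k^{-}\ps{2}\ot_R k^{-}\ps{3}k^{+}$. Applying $\Delta\ot_R\Id$ to part i) and using coassociativity once more identifies this with $k\ps{1}\ot_R k\ps{2}\ot_R 1_\Kc$. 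On the right-hand side, $\Id\ot\nu$ together with $\nu\circ\nu^{-1}=\Id$ on the last two legs gives $k\ps{1}\ot_R \nu\big((k\ps{2})^{-}\ot_{R^{op}}(k\ps{2})^{+}\big)=k\ps{1}\ot_R k\ps{2}\ot_R 1_\Kc$ directly. The two images agree, so injectivity of $\Id_\Kc\ot_R\nu$ forces the two sides to be equal, proving ii).

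The genuinely delicate point, and the one I expect to be the main obstacle, is the claim that $\Id_\Kc\ot_R\nu$ is well defined and injective on $\Kc\ot_R\Kc\ot_{R^{op}}\Kc$. For this one must verify that $\nu$ is left $R$-linear on the output leg that is glued to the outer $\ot_R$; this holds because $\Delta$ is an $R$-bimodule map, and bijectivity of $\nu$ then promotes $\Id\ot_R\nu$ to a bijection with the obvious inverse. One must likewise check at each step that the mixed $\ot_R$ and $\ot_{R^{op}}$ balancing relations are respected, which is precisely the bookkeeping of the source and target maps; once this is settled, the coassociativity computations are forced and the rest is routine.
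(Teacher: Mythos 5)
Your argument is correct. Note that the paper itself gives no proof of this lemma---it is quoted from \cite{HR1}---but your derivation (part i) as $\nu\circ\nu^{-1}=\Id$ read off in Sweedler notation, part ii) by applying the injective map $\Id_{\Kc}\ot_R\nu$ to both sides and matching them via coassociativity and part i)) is exactly the standard proof found in the cited sources, and you correctly isolate the only genuinely delicate point, namely that $\Id_{\Kc}\ot_R\nu$ is well defined and bijective because $\Delta$ (hence $\nu$ and $\nu^{-1}$) is left $R$-linear through $\mathfrak{s}$ on the relevant leg.
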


  Here we briefly recall the definitions of  modules, comodules and stable anti-Yetter-Drienfeld (SAYD) modules for a left $\times$-Hopf algebra.
A right module over a left $\times_{R}$-Hopf algebra $\Kc$ is a right $\mathcal{K}$-module $M$. A right $\mathcal{K}$-module $M$ can be equipped with a $R$-bimodule structure as follows:
$$r\cdot m=s(r)\cdot m, \quad \text{ and} \quad m \cdot r=t(r)\cdot m $$

 A left comodule of a left $\times_{R}$-Hopf algebra $\Kc$ is defined to be a left comodule of the underlying $R$-coring $(\mathcal{K},\Delta,\varepsilon)$, that is, a left $R$-module $M$, together with a left $R$-module map $M\longrightarrow \mathcal{K}\otimes_{R}M,$ $m\longmapsto m\ns{-1}\otimes_{R}m\ns{-1},$ satisfying coassociativity and counitality axioms.
 One notes that a left $\mathcal{K}$-comodule $M$ can be equipped with a $R$-bimodule structure by introducing a right $R$-action as follows,
\[m\cdot r:=\varepsilon(m\ns{-1}s(r))\cdot m\ns{0} \label{can},\] for $r\in R$ and $m\in M.$ With respect to the resulting bimodule structure, $\mathcal{K}$-comodule maps are $R$-bimodule maps. In the special case, the left $\mathcal{\Kc}$-coaction on $M$ is an $R$-bimodule map in the sense that for all $r,r'\in R$ and $m\in M$, we have;
\begin{equation}
(r\cdot m \cdot r')\ns{-1}\otimes_{R} (r\cdot m \cdot r')\ns{0}= s(r)m\ns{-1}s(r')\otimes_{R} m\ns{0}.
\end{equation}
 Furthermore,  for all $m\in M$ and $r\in R$ we have;
 \begin{equation}
m\ns{-}\otimes_{R} m\ns{0}\cdot r= m\ns{-1}t(r)\otimes_{R}m\ns{0}.
\end{equation}
 Let $M$ be a right $\mathcal{\Kc}$-module and a left $\mathcal{K}$-comodule.  We say $M$ is an anti Yetter-Drinfeld, AYD, module provided that the following two conditions hold.\\

$i)$ The $R$-bimodule structures on $M$, underlying its module and comodule structures, coincide. That is, for $m\in M$ and $r\in R$,
$$m \cdot r= m \mathfrak{s}(r), \qquad \text{and} \qquad r \cdot m= m  \mathfrak{t}(r),$$ where $r\cdot m$ denote the left $R$-action on the left $\mathcal{K}$-comodule $M$ and $r\cdot m$ is the canonical right action.\\

$ii)$ For $k\in\mathcal{K}$ and $m\in M$ we have;
\begin{equation}\label{SAYD-condition}
(m k)\ns{-1}\otimes(m  k)\ns{0}= {k\ps{2}}^{+} m\ns{-1}k\ps{1}\otimes_{R} m\ns{0} {k\ps{2}}^{-}.
\end{equation}
 This condition is known as the anti Yetter-Drinfeld (AYD) condition. The anti Yetter-Drinfeld module $M$ is said to be stable if in addition for any $m\in M$ we have $m\ns{0}m\ns{-1}=m.$\\

\begin{example}\label{sayd-eg1}{\rm \emph{\textbf{SAYD module structure on the ground algebra $R$ }}:
A map  $\d$ is called a  right character \cite[Lemma 2.5]{b1}, for a  left $\times_R$-Hopf algebra $\mathcal{K}$ if it satisfies the following conditions:
\begin{align}\label{char-1}
& \delta(k \mathfrak{s}(r))= \delta(k)r, \quad \text{for} \quad k\in \mathcal{K} \quad \text{and} \quad r\in R,\\\label{char-2}
& \delta(k_1k_2)=\delta(\mathfrak{s}(\delta(k_1))k_2), \quad \text{ for} \quad k_1,k_2\in \mathcal{K},\\\label{char-3}
& \delta(1_{\mathcal{K}})=1_R.
\end{align}
As an example, for any right $\times_R$-Hopf algebra, the counit $\varepsilon$ is a right character. Now we recall the SAYD structure of $R$ from \cite[Example 2.18]{bs2} and \cite[Example 2.5]{HR1}. Let $\sigma \in \mathcal{K}$  be a group-like element and the map $\d:\mathcal{K}\longrightarrow R$ be a right character.
The following action and coaction,
\begin{align}\label{RSAYD}
r \triangleleft k= \d(\mathfrak{s}(r)k), \quad \text{and} \quad r\longmapsto \mathfrak{s}(r)\sigma \ot 1
\end{align}

define a right $\mathcal{K}$-module and left $\mathcal{K}$-comodule structure  on $R$, respectively. These action and coaction amount  to a right-left anti Yetter-Drinfeld module on $R$ if and only if, for all $r\in R$ and $k\in \mathcal{K}$ we have;
\begin{align}
\mathfrak{s}(\d(k))\sigma=\mathfrak{t}(\d({k\ps{2}}^-))k^{(2)+}\sigma k\ps{1}, \quad \text{and} \quad \varepsilon(\sigma \mathfrak{s}(r))=\d(\mathfrak{s}(r)).
\end{align}
The anti Yetter-Drinfeld module $R$ is stable if in addition we have $\d(\mathfrak{s}(r)\sigma)=r$, for all $r\in R$. We denote this SAYD module over $\Kc$ by $^\sigma R_{\delta}$.
}
\end{example}
\subsection{Cocyclic modules for $\times$-Hopf algebras}

In this subsection we introduce two cocyclic modules for $\times$-Hopf algebras under the symmetry of  module algebras and module corings.

 For any left $\times_R$-Hopf algebra  $\mathcal{K}$ a left $\mathcal{K}$-module coring  $C$ is a $R$-coring and a left $\mathcal{K}$-module with one and the same underlying $R$-bimodule structure, such that counit $\varepsilon$ and comultiplication $\Delta$  are both left $\mathcal{K}$-linear. We consider the left $\mathcal{K}$-module structure of $R$ by $k\vartriangleright r:= \varepsilon(ks(r))$ and left $\mathcal{K}$-module structure of $C\otimes_{R}C$ is defined by the diagonal action. This means;
 \begin{align}
&\varepsilon(k\vartriangleright c)= k\vartriangleright \varepsilon(c) = \varepsilon(ks(\varepsilon(c))),\\
&\Delta(k\vartriangleright c)= k\ps{1} \vartriangleright c\ps{1} \otimes_{R}k\ps{2} \vartriangleright c\ps{2}.
\end{align}
 For any  left $\mathcal{K}$-module coring $C$ and  a right-left SAYD module $M$ over $\mathcal{K}$, one  defines a cocyclic module as follows. We set
$$_{\Kc}C^{n}(C,M)= M\otimes_{\mathcal{K}}C^{\otimes_{R}(n+1)}.$$ Here  $\Kc$ acts on $C^{\ot_R(n+1)}$ by the diagonal action.  We abbreviate $\widetilde{c}=c_{0}\otimes_{R}\cdots \otimes_{R}c_{n}$ and define the following cofaces, codegeneracies and cocyclic maps.
\begin{eqnarray}\label{cocyclic-module-1}
&&d_{i}(m\otimes_{\mathcal{K}}\widetilde{c})= m\otimes_{\mathcal{K}}c_0\otimes_{R}\cdots \otimes_{R} \Delta(c_{i}) \otimes_{R}\cdots \otimes_{R} c_{n}, \nonumber \\
&&d_{n+1}(m\otimes_{\mathcal{K}}\widetilde{c})=m\ns{0}\otimes_{\mathcal{K}}c_{0}\ps{2}\otimes_{R}c_1\otimes_{R}\cdots\otimes_{R}c_{n}\otimes_{R}m\ns{-1}c_{0}\ps{1},\nonumber \\
&&s_{i}(m\otimes_{\mathcal{K}}\widetilde{c})=m\otimes_{\mathcal{K}}c_{0}\otimes_{R}\cdots\otimes_{R}\varepsilon(c_{i})\otimes_{R}\cdots\otimes_{R}c_{n}, \nonumber \\
&&t_{n}(m\otimes_{\mathcal{K}}\widetilde{c})=m_{(0)}\otimes_{\mathcal{K}}c_{1}\otimes_R \cdots \ot_R c_n\otimes_{R}m\ns{-1}c_{0}.
\end{eqnarray}

\begin{proposition}   The modules  $_{\Kc}C^{n}(C,M)$ with cofaces, codegeneracies and cocyclic map  defined in \eqref{cocyclic-module-1} define a cocyclic module.
 \end{proposition}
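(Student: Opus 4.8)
The plan is to check, in order: (a) that every operator in \eqref{cocyclic-module-1} descends to the balanced tensor product $M\ot_\Kc C^{\ot_R(n+1)}$; (b) the cosimplicial identities among the $d_i$ and the $s_i$; and (c) the cyclic compatibility relations, culminating in the normalization $t_n^{\,n+1}=\Id$. It is convenient to separate the operators into two families: the \emph{inner} operators $d_0,\dots,d_n$ and $s_0,\dots,s_n$, which merely apply $\D$ or $\ve$ to a single coring factor and do not touch $M$, and the \emph{coaction-twisted} operators $d_{n+1}$ and $t_n$, which feed the legs of the left $\Kc$-coaction of $M$ back into the coring factors.

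For the inner family everything is essentially formal. Well-definedness over $\ot_\Kc$ follows from the left $\Kc$-linearity of $\D$ and $\ve$ (the module-coring axioms) together with the fact that $\Kc$ acts diagonally, so that $\D$ and $\ve$ intertwine the diagonal actions in the relevant degrees; the $\ot_R$-balancing is automatic since $\D$ and $\ve$ are $R$-bimodule maps. The cosimplicial identities involving only $d_0,\dots,d_n$ and $s_0,\dots,s_n$ are then exactly those of the standard cosimplicial module attached to the $R$-coring $C$, and I would dispatch them by the usual short checks using coassociativity and counitality of $\D$.

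The substance is in the two coaction-twisted operators. First I would record the single identity $d_{n+1}=t_{n+1}\,d_0$, which follows by a direct comparison of the formulas in \eqref{cocyclic-module-1}: applying $t_{n+1}$ to $d_0(m\ot_\Kc\widetilde{c})=m\ot_\Kc c_0\ps{1}\ot_R c_0\ps{2}\ot_R c_1\ot_R\cdots\ot_R c_n$ reproduces $d_{n+1}$. The payoff is that the exceptional coface is expressed through $t$ and $d_0$, so that its well-definedness and every cosimplicial relation containing it reduce to statements about the inner operators and $t$. What remains, and what I expect to be the main obstacle, is the well-definedness of $t_n$ itself, i.e.\ the equality $t_n(mk\ot_\Kc\widetilde{c})=t_n(m\ot_\Kc k\cdot\widetilde{c})$. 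Expanding the left-hand side and substituting the anti Yetter--Drinfeld identity \eqref{SAYD-condition} for $(mk)\ns{-1}\ot(mk)\ns{0}$ produces a factor ${k\ps{2}}^{-}$ acting on $M$ and a linked factor ${k\ps{2}}^{+}$ inserted on the cycled coring slot; to transport ${k\ps{2}}^{-}$ across $\ot_\Kc$ as a diagonal action and collapse it against ${k\ps{2}}^{+}$ I would invoke Lemma~\ref{lemma-proof}, whose identities i) and ii) encode exactly the interaction of $\nu^{-1}$ with $\D$. This $\nu^{\pm}$ bookkeeping is the delicate step; once it is in place, the same manipulation also yields the remaining cyclic compatibilities of $t$ with the $d_i$ and $s_i$ (including the exceptional degeneracy relation $t_n s_0=s_n t_{n+1}^{2}$), which otherwise rest only on coassociativity of the coaction.

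Finally, for $t_n^{\,n+1}=\Id$ I would iterate $t_n$. A short induction shows that after $n+1$ steps each factor $c_i$ has returned to its original slot carried by the $(i{+}1)$-st leg of the iterated coaction of $m$; by coassociativity of the left coaction these legs assemble into $\D^{(n)}(m\ns{-1})$ acting diagonally, so that
\begin{equation*}
t_n^{\,n+1}(m\ot_\Kc\widetilde{c})=m\ns{0}\ot_\Kc\bigl(m\ns{-1}\cdot\widetilde{c}\bigr),
\end{equation*}
where $m\ns{-1}\cdot\widetilde{c}$ denotes the diagonal action. Pulling $m\ns{-1}$ back across $\ot_\Kc$ onto $M$ and applying the stability identity $m\ns{0}m\ns{-1}=m$ gives $m\ot_\Kc\widetilde{c}$. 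Thus stability is precisely what upgrades the para-cocyclic object to a genuine cocyclic module, and combining (a)--(c) finishes the proof.
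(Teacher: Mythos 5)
Your proposal is correct and follows essentially the same route as the paper: the only genuinely non-formal points are the well-definedness of $t_n$ over $\ot_{\Kc}$, which you settle exactly as the paper does by substituting the anti Yetter--Drinfeld identity \eqref{SAYD-condition} and collapsing the resulting ${k\ps{2}}^{\pm}$ factors via Lemma \ref{lemma-proof}, and the normalization $t_n^{\,n+1}=\Id$, which both arguments reduce to the stability condition after reassembling the iterated coaction legs into a diagonal action. Your observation that $d_{n+1}=t_{n+1}\,d_0$ is a small but genuine tidying of the paper's treatment, since it makes the well-definedness of the exceptional coface (which also involves the coaction) an automatic consequence of that of $t$ and $d_0$ rather than a point passed over in silence.
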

\begin{proof}

Since $C$ is a left $\Kc$-module coring and comultiplication map  $\Delta$ is a left $\Kc$-module map the cofaces and the codegeneracies are well-defined.
The following computation shows that  the cyclic map  is well defined. In fact for all $k\in \Kc$ we have,
 \begin{align*}
   &t_{n}(mk\otimes_{\mathcal{K}} c_{0}\otimes_{R}\cdots\otimes_{R} c_n) \\
   &= (mk)\ns{0}\otimes_{\mathcal{K}} c_{1}\otimes_{R}\cdots\otimes_{R} c_n \ot (mk)\ns{-1}c_0\\
   &=m\ns{0}{k\ps{2}}^- \otimes_{\mathcal{K}} c_{1}\otimes_{R}\cdots\otimes_{R} c_n \ot {k\ps{2}}^+ m\ns{-1}k\ps{1}c_0\\
   &=m\ns{0}{k^-\ps{2}} \otimes_{\mathcal{K}} c_{1}\otimes_{R}\cdots\otimes_{R} c_n \ot_R k^+ m\ns{-1}k^-\ps{1}c_0\\
   &=m\ns{0} \otimes_{\mathcal{K}}{k^-\ps{2}}\triangleright( c_{1}\otimes_{R}\cdots\otimes_{R} c_n \ot_R k^+ m\ns{-1}k^-\ps{1}c_0)\\
   &=m\ns{0} \otimes_{\mathcal{K}}{k^-\ps{2}}\ps{1} c_{1}\otimes_{R}\cdots\otimes_{R} k^-\ps{2}\ps{n}c_n \ot_R k^-\ps{2}\ps{n+1}   k^+ m\ns{-1}k^-\ps{1}c_0\\
   &=m\ns{0} \otimes_{\mathcal{K}}{k^-\ps{2}} c_{1}\otimes_{R}\cdots\otimes_{R} k^-\ps{n+1}c_n \ot_R k^-\ps{n+2}   k^+ m\ns{-1}k^-\ps{1}c_0\\
   &=m\ns{0}\otimes_{\mathcal{K}} k\ps{2}c_{1}\otimes_{R}\cdots\otimes_{R} k\ps{n}c_n \ot m\ns{-1}k\ps{1}c_{0}\\
   &=t_{n}(m\otimes_{\mathcal{K}} k\ps{1}c_{0}\otimes_{R}\cdots\otimes_{R} k\ps{n}c_n) \\
   &=t_{n}(m\otimes_{\mathcal{K}} k\triangleright (c_{0}\otimes_{R}\cdots\otimes_{R} c_n)) .\\
 \end{align*}

We use the SAYD condition \eqref{SAYD-condition} in the second equality, Lemma \ref{lemma-proof}(ii) in the third equality, the diagonal action in the fifth equality. In the seventh equality we have applied the map $\Delta^{\ot n}\ot \Id$ on Lemma \ref{lemma-proof}(i).  The following computation shows the cyclicity   relation $t^{n+1}=\Id$.

\begin{align*}
  &t^{n+1}(m\ot_{\Kc}c_{0}\otimes_{R}\cdots\otimes_{R} c_n)\\
  &=m\ns{0}\ot_{\Kc} m\ns{-1} c_0 \ot_R \cdots \ot_R m\ns{-(n+1)} c_n\\
  &=m\ns{0}\ot_{\Kc} m\ns{-1}\triangleright (c_0\ot_R \cdots \ot_R c_n)\\
  &=m\ns{0}m\ns{-1}\ot_{\Kc} c_0\ot_R \cdots \ot_R c_n\\
  &= m\ot_{\Kc}c_{0}\otimes_{R}\cdots\otimes_{R} c_n.
\end{align*}
We used the stability condition in the last equality. We leave to the reader to check that $d_i$, $s_i$ and $t$ satisfy all other conditions of a cocyclic module.


\end{proof}

We denote the cyclic cohomology of the previous cocyclic module by $_{\Kc}HC^*(C,M)$.
Now we describe the cyclic cohomology of a module algebra under the symmetry of a $\times$-Hopf algebra.
A left $\Kc$-module algebra $A$ is a $\mathbb{C}$-algebra and a left $\mathcal{K}$-module satisfying the following conditions for all $k\in \Kc$, $a,a'\in A$ and $r\in R$:
\begin{enumerate}
\item[i)] $k\triangleright 1_{A}=s(\varepsilon(k))\triangleright 1_{A},$
\item[ii)] $h\triangleright (a a')= (k\ps{1}\triangleright a)(k\ps{2}\vartriangleright a'),$
\item[iii)] $(\mathfrak{t}(r)\triangleright a)a'=a(s(r)\triangleright a'),$ (multiplication is $R$-balanced).
\end{enumerate}

  For any  left $\mathcal{K}$-module algebra $A$ and a right-left SAYD module  $M$ over $\mathcal{K}$, we set
  $$_{\Kc}C^{n}(A, M)=\Hom_{R}(M\otimes_{\mathcal{K}} A^{\otimes_{R}(n+1)},R),$$ to be the set of all $R$-linear maps from $M\otimes_{\Kc} A^{\otimes_{R}(n+1)}$ to $R$. Here we consider $A^{\otimes_{R}(n+1)}$ as a left $\Kc$ module by the diagonal action.  Now we define the following cofaces, codegeneracies and cocyclic maps.
\begin{eqnarray}\label{cyclic-module algebra}
&&(\delta_{i}f)(m\otimes_{\mathcal{K}}a_{0}\otimes_{R}\cdots\otimes_{R}a_{n})= f(m\ot_{\mathcal{K}}\ot a_0\ot_R\cdots\ot_R a_i a_{i+1}\ot_R\cdots \ot_R a_n ),\nonumber \\
&&(\delta_{n}f)(m\otimes_{\Kc}a_{0}\otimes_{R}\cdots\otimes_{R}a_{n})= f(m\ns{0}\ot_{\Kc}  a_n( m\ns{-n}a_0)\ot_R\dots\ot_R m\ns{-1}a_{n-1}),  \nonumber \\
&&(\sigma_{i}f)(m\otimes_{\Kc}a_{0}\otimes_{R}\cdots\otimes_{R}a_{n})= f(m\ot_{\Kc} a_0\ot_R \cdots \ot_R 1\ot_R\cdots \ot_Ra_n),\nonumber \\
&&(\tau_{n}f)(m\otimes_{\Kc}a_{0}\otimes_{R}\cdots\otimes_{R}a_{n})= f(m\ns{0}\ot_{\Kc} a_n\ot_R  m\ns{-1}a_0\ot\dots\ot_R m\ns{-n}a_{n-1}).
\end{eqnarray}
Here we use the notation $m\ns{-i}= m\ns{-1}\ps{i}$.
\begin{proposition}
  The modules $_{\Kc}C^{n}(A, M)$ with the cofaces, codegeneracies and cocyclic maps defined in \eqref{cyclic-module algebra} define a cocyclic module.
\end{proposition}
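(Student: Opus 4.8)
The plan is to recognise the cocyclic module $_{\Kc}C^{n}(A,M)=\Hom_{R}(M\otimes_{\Kc}A^{\otimes_R(n+1)},R)$ as the image, under the contravariant functor $\Hom_{R}(-,R)$, of a \emph{cyclic} module structure carried by the objects $b_{n}:=M\otimes_{\Kc}A^{\otimes_R(n+1)}$ themselves. Writing $\widetilde a=a_{0}\otimes_{R}\cdots\otimes_{R}a_{n}$, every operator in \eqref{cyclic-module algebra} is defined by precomposition, hence is the $\Hom_{R}(-,R)$-dual of an operator on the $b_{n}$: the cofaces $\delta_{i}$ are dual to the faces $\partial_{i}$ given for $0\le i\le n-1$ by multiplying the adjacent entries $a_{i}a_{i+1}$ and for $i=n$ by $\partial_{n}(m\otimes_{\Kc}\widetilde a)=m\ns{0}\otimes_{\Kc}a_{n}(m\ns{-n}a_{0})\otimes_{R}m\ns{-(n-1)}a_{1}\otimes_{R}\cdots\otimes_{R}m\ns{-1}a_{n-1}$; the codegeneracies $\sigma_{i}$ are dual to the degeneracies $s_{i}$ inserting $1_{A}$; and $\tau_{n}$ is dual to $t_{n}(m\otimes_{\Kc}\widetilde a)=m\ns{0}\otimes_{\Kc}a_{n}\otimes_{R}m\ns{-1}a_{0}\otimes_{R}\cdots\otimes_{R}m\ns{-n}a_{n-1}$. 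Since a contravariant functor sends a cyclic module to a cocyclic module and reverses all of Connes' relations, it suffices to prove that $(b_{n},\partial_{i},s_{i},t_{n})$ is a cyclic module.

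First I would check that each operator is well defined over both $\otimes_{R}$ and $\otimes_{\Kc}$. For the inner faces $\partial_{i}$ with $i<n$, compatibility with the diagonal $\Kc$-action follows from the module-algebra axiom $k\triangleright(aa')=(k\ps{1}\triangleright a)(k\ps{2}\triangleright a')$, while $R$-balancedness of the multiplication (axiom (iii)) guarantees that the product $a_{i}a_{i+1}$ descends through the relevant $\otimes_{R}$; the degeneracies $s_{i}$ are well defined by the unit axiom (i). The two operators that genuinely involve the $\times$-Hopf and comodule structures are $\partial_{n}$ and $t_{n}$.

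The crux is the well-definedness of $t_{n}$ over $\otimes_{\Kc}$, that is the identity $t_{n}(mk\otimes_{\Kc}\widetilde a)=t_{n}(m\otimes_{\Kc}k\triangleright\widetilde a)$ for all $k\in\Kc$. This is the exact analogue of the computation performed for the module-coring proposition: starting from $t_{n}(mk\otimes_{\Kc}\widetilde a)=(mk)\ns{0}\otimes_{\Kc}a_{n}\otimes_{R}(mk)\ns{-1}a_{0}\otimes_{R}\cdots$, I would apply the SAYD condition \eqref{SAYD-condition} to rewrite $(mk)\ns{-1}\otimes(mk)\ns{0}$, then use Lemma \ref{lemma-proof}(ii) to transport the factors $k^{-}\ps{1},k^{-}\ps{2},k^{+}$ across the tensor, move $k^{-}\ps{2}$ through $\otimes_{\Kc}$ by the diagonal action, and finally collapse the string $k^{-}\ps{2}\cdots k^{+}$ using $\Delta^{\ot n}\ot\Id$ applied to Lemma \ref{lemma-proof}(i), landing on $t_{n}(m\otimes_{\Kc}k\ps{1}a_{0}\otimes_{R}\cdots\otimes_{R}k\ps{n+1}a_{n})=t_{n}(m\otimes_{\Kc}k\triangleright\widetilde a)$. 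The same manipulations show that $\partial_{n}$ is well defined. The cyclicity relation $t_{n}^{\,n+1}=\Id$ is then obtained exactly as before: iterating $t_{n}$ and using coassociativity of the coaction (so that $m\ns{-i}=m\ns{-1}\ps{i}$) produces $m\ns{0}\otimes_{\Kc}m\ns{-1}\triangleright\widetilde a=m\ns{0}m\ns{-1}\otimes_{\Kc}\widetilde a$, which collapses to $m\otimes_{\Kc}\widetilde a$ by stability. I expect this well-definedness step to be the main obstacle, since it is the only place where the SAYD axiom, both parts of Lemma \ref{lemma-proof}, and the diagonal action all interact at once.

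Finally I would verify the simplicial and cyclic identities for $(b_{n},\partial_{i},s_{i},t_{n})$. The purely simplicial relations among the $\partial_{i}$ and $s_{i}$ follow from associativity and unitality of $A$, just as in the ordinary cyclic module of an algebra; the relations involving $t_{n}$ (namely $\partial_{i}t_{n}=t_{n-1}\partial_{i-1}$, $s_{i}t_{n}=t_{n+1}s_{i-1}$ and their boundary cases) are checked by the same bookkeeping with the coaction of $M$ and the identities of Lemma \ref{lemma-proof} used above. These are routine and parallel the module-coring case, so I would leave the bulk of them to the reader. Dualising under the contravariant functor $\Hom_{R}(-,R)$ then yields at once the asserted cocyclic module structure on $_{\Kc}C^{n}(A,M)$.
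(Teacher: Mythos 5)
Your proposal is correct and follows essentially the same route as the paper: the decisive step in both is the well-definedness of the cyclic operator over $\otimes_{\Kc}$, proved by exactly the chain you describe (SAYD condition, Lemma \ref{lemma-proof}(ii), the diagonal action, then $\Delta^{\ot n}\ot\Id$ applied to Lemma \ref{lemma-proof}(i)), with cyclicity reduced to the stability of $M$ and the remaining simplicial identities left to the reader. Your framing via dualizing a cyclic structure on $M\otimes_{\Kc}A^{\otimes_R(n+1)}$ under $\Hom_R(-,R)$, and your direct iteration of $t_n$ in place of the paper's factorization $\tau_n=\theta^n$, are only cosmetic repackagings of the same computations.
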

\begin{proof}
  It is easy to see that cofaces and codegeneracies are well-defined. The following computation shows that the cocyclic map is well-defined. In fact for all $k\in \Kc$ we have,
  \begin{align*}
   &(\tau_nf)(mk\otimes_{\Kc}a_{0}\otimes_{R}\cdots\otimes_{R}a_{n})\\
&=f\big((mk)\ns{0}\ot_{\Kc} a_n\ot_R  (mk)\ns{-1}a_0\ot\dots\ot_R (mk)\ns{-n}a_{n-1}\big)\\
&=f\big(m\ns{0}k{\ps{2}}^-\ot_{\Kc} a_n \ot_R ({k\ps{2}}^+m\ns{-1}k\ps{1})\ps{1}a_0\ot_R \cdots \ot_R({k\ps{2}}^+m\ns{-1}k\ps{1})\ps{n}a_{n-1}\big)\\
&=f\big(m\ns{0}k^-{\ps{2}}\ot_{\Kc} a_n \ot_R (k^+m\ns{-1}k^-\ps{1})\ps{1}a_0\ot_R \cdots \ot_R(k^+m\ns{-1}k^-\ps{1})\ps{n}a_{n-1}\big)\\
&=f\big(m\ns{0}k^-{\ps{2}}\ot_{\Kc} a_n \ot_R k^+\ps{1}m\ns{-1}\ps{1}k^-\ps{1}\ps{1}a_0\ot_R \cdots \ot_R k^+\ps{n}m\ns{-1}\ps{n}k^-\ps{1}\ps{n}a_{n-1}\big)\\
&=f\big(m\ns{0}k^-{\ps{n+1}}\ot_{\Kc} a_n \ot_R k^+\ps{1}m\ns{-1}k^-\ps{1}a_0\ot_R \cdots \ot_R k^+\ps{n}m\ns{-n}k^-\ps{n}a_{n-1}\big)\\
&=f\big(m\ns{0}\ot_{\Kc}k^-{\ps{n+1}}\triangleright ( a_n \ot_R k^+\ps{1}m\ns{-1}k^-\ps{1}a_0\ot_R \cdots \ot_R k^+\ps{n}m\ns{-n}k^-\ps{n}a_{n-1})\big)\\
&=f\big(m\ns{0}\ot_{\Kc}k^-{\ps{n+1}}\ps{1} a_n \ot_R k^-{\ps{n+1}}\ps{2}  k^+\ps{1}m\ns{-1}k^-\ps{1}a_0\ot_R \cdots \ot_R k^-{\ps{n+1}}\ps{n+1} k^+\ps{n}m\ns{-n}k^-\ps{n}a_{n-1}\big)\\
&=f\big(m\ns{0}\ot_{\Kc}k^-{\ps{n+1}} a_n \ot_R k^-{\ps{n+2}}  k^+\ps{1}m\ns{-1}k^-\ps{1}a_0\ot_R \cdots \ot_R k^-{\ps{2n+1}} k^+\ps{n}m\ns{-n}k^-\ps{n}a_{n-1}\big)\\
&=f(m\ns{0}\ot_{\Kc} k\ps{n+1}a_n\ot_R m\ns{-1}k\ps{1}a_{0}\ot_R \cdots \ot_R m\ns{-n}k\ps{n}a_{n-1})\\
&=(\tau_nf)(m\otimes_{\Kc}k\ps{1}a_{0}\otimes_{R}\cdots\otimes_{R}k\ps{n+1}a_{n})\\
 &=(\tau_nf)(m\otimes_{\Kc}k\triangleright (a_{0}\otimes_{R}\cdots\otimes_{R}a_{n})).\\
   \end{align*}
We used the AYD condition \eqref{SAYD-condition} in the second equality, Lemma \ref{lemma-proof}(ii) in the third equality, the algebra property of the comultiplication in the forth equality, the diagonal action on seventh equality and Lemma \ref{lemma-proof}(i) in the ninth equality.
Here among all the commutativity relations between $\delta_i$, $\sigma_i$ and $\tau$, we only check the cyclicity condition. Instead of a  direct long computation for $\tau^{n+1}$, we notice that
  $\tau_{n}= \theta^{n}$, where $\theta: _{\Kc}C^{n}(A, M) \longrightarrow _{\Kc}C^{n}(A, M)$ is defined by

  \begin{equation*}
    (\theta f)(m\otimes_{\Kc}a_{0}\otimes_{R}\cdots\otimes_{R}a_{n})= f(m\ns{0}\ot_{\Kc} a_1\ot_R \cdots \ot_R a_n \ot_R m\ns{-1}a_0).
  \end{equation*}
  Furthermore,
  \begin{align*}
    &(\theta^{n+1}f)(m\otimes_{\Kc}a_{0}\otimes_{R}\cdots\otimes_{R}a_{n})\\
    &=f(m\ns{0}\ot_{\Kc}m\ns{-1}a_0\ot_R \cdots \ot_R m\ns{-1}a_n\\
    &=f(m\ns{0}\ot_{\Kc} m\ns{-1}\triangleright (a_0\ot_R \cdots \ot_R a_n))\\
    &=f(m\ns{0} m\ns{-1}\ot_{\Kc}a_0\ot_R \cdots \ot_R a_n)\\
    &= f(m\otimes_{\Kc}a_{0}\otimes_{R}\cdots\otimes_{R}a_{n}).\\
  \end{align*}
We used the stability condition in the last equality. This shows that $\theta^{n+1}=\Id$ and therefore $\tau^{n+1}= \theta^{n(n+1)} = \Id$.
\end{proof}
The cyclic cohomology of this cocyclic module is denoted by $_{\Kc}\widetilde{HC}^*(A,M)$ which generalizes the dual cyclic cohomology defined in \cite{kr1}.

\begin{example}{\rm \emph{Cyclic cohomology of a $\times_R$-Hopf algebra with coefficients in $R$}:\\
If the ground algebra $R$ has a  $\Kc$-SAYD module structure as explained in Example \ref{sayd-eg1}, then the related cocyclic module is;
\begin{equation}
  _{\Kc}C^n(\Kc,R)= R\ot_{\Kc}\underbrace{\Kc\ot_R\cdots \ot_R \Kc}_{n+1~times}\cong\underbrace{\Kc\ot_R\cdots \ot_R \Kc}_{n~times}.
\end{equation}
The isomorphism is given by
\begin{equation}
  \rho(r\ot_{\Kc}k_0\ot_R \cdots \ot_R k_n)= \mathfrak{s}(r\triangleleft k_0)k_1\ot_R\cdots \ot_R k_n,
\end{equation}
which is in fact;

\begin{equation}
   \rho(r\ot_{\Kc}k_0\ot_R \cdots \ot_R k_n)= \mathfrak{s}(\delta(\mathfrak{s}(r) k_0))k_1\ot_R\cdots \ot_R k_n,
\end{equation}
with the  inverse map which is given by
\begin{equation}
  \rho^{-1}(k_1\ot_R\cdots \ot_R k_n)= 1_R\ot_{\Kc} 1_{\Kc}\ot_{R}k_1\ot_R\cdots \ot_R k_n.
\end{equation}
Therefore the cocyclic module \eqref{cocyclic-module-1} is simplified to the following one.
\begin{eqnarray}\label{simplify-cocyclic-module-1}
&&d_{0}(k_1\ot_R\cdots \ot_R k_n)= 1_{\Kc}\ot_R k_1\otimes_{R} \otimes_{R} k_{n}, \nonumber \\
&&d_{i}(k_1\ot_R\cdots \ot_R k_n)= k_1\otimes_{R}\cdots \otimes_{R} \Delta(k_{i}) \otimes_{R}\cdots \otimes_{R} k_{n}, \nonumber \\
&&d_{n+1}(k_1\ot_R\cdots \ot_R k_n)=k_1\ot_R\cdots \ot_R k_n\ot_R \sigma,\nonumber \\
&&s_{0}(k_1\ot_R\cdots \ot_R k_n)=\mathfrak{s}(\delta(k_1))k_2\ot_R k_3\ot_R \cdots \ot_R k_n, \nonumber\\
&&s_{i}(k_1\ot_R\cdots \ot_R k_n)=k_{1}\otimes_{R}\cdots\otimes_{R}\varepsilon(k_{i})\otimes_{R}\cdots\otimes_{R}k_{n}, \nonumber \\
&&t_{n}(k_1\ot_R\cdots \ot_R k_n)=\mathfrak{s}(\delta(k_1))k_2\ot_R k_3\ot_R\cdots \ot_R k_n\ot_R \sigma. \nonumber
\end{eqnarray}

}

\end{example}
\subsection{A pairing for cyclic cohomology of $\times$-Hopf algebras}

 Let $\Kc$ be a left $\times_{R}$-Hopf algebra, $M$ be a right-left SAYD module over $\Kc$,   $A$ be a left $\Kc$-module algebra and $C$ be a left $\Kc$-module coring. Let $C$ act on $A$ from the left satisfying the following conditions for all $k\in \Kc$, $c\in C$ and  $a\in A$:
\begin{align}\label{cond1}
&(kc)a= k(ca),\\\label{cond2}
&c(ab)= (c\ps{1}a)(c\ps{2}b),\\\label{cond3}
&c\vartriangleright 1_{A}=\varepsilon(c)\vartriangleright 1_{A}.
\end{align}
 We define $B=\Hom_{\Kc}(C,A)$ to be the set of  all maps from the $R$-coring $C$ to $A$ which are both $\Kc$-linear and $R$-linear. The space $B$ is an algebra over $\mathbb{C}$ by the multiplication $\ast$ which is defined by
\begin{equation}
  (f\ast g)(c)= f(c\ps{1})f(c\ps{2}).
\end{equation}
We denote that $B$ is a $R$-bimodule by $(r\triangleright f)(c)= f(r\triangleright c)$ and similarly for the right $R$-action. There exists an unital algebra map given by $\eta_A: R\longrightarrow A,\quad \eta_A(r)=r\triangleright 1_A= s(r)\triangleright 1_A$. Therefore $B$ has the unit element $\eta_B=\eta_A \circ \ve_C$. We remind that the cyclic cohomology of the algebra $B$ is computed by the cohomology of the following cocyclic module
\begin{align*}
 &\delta^a_i(\varphi)(b_0\ot \cdots \ot b_{n+1})=\varphi(b_0\ot \cdots \ot b_i b_{i+1}\ot \cdots \ot b_{n+1}),\\
 &\sigma^a_i(\varphi)(b_0\ot \cdots \ot b_n)=\varphi(b_0\ot \cdots \ot \ve(b_i)\ot \cdots \ot b_{n}),\\
 &\tau^a_i(\varphi)(b_0\ot \cdots \ot b_n)=\varphi(b_n\ot b_0\ot \cdots \ot b_{n-1}).
\end{align*}

Let $$C^{n,n}_{a,c}=_{\Kc}C^n(A,M)\boxtimes~ _{\Kc}C^n(C,M) $$ be the diagonal complex which is a cocyclic module by $(\delta_n\ot d_n, \sigma_n\ot s_n\ot \tau_n\ot t_n)$. We define the following map;
\begin{align*}
  &\Psi_c:C^{n,n}_{a,c}\longrightarrow \Hom(B^{\ot_R(n+1) }, \mathbb{C}),\\
  &\Psi_c(\phi \boxtimes m\ot_{\Kc} c_0\ot_R \cdots \ot_R c_n)(f_0\ot_R \cdots \ot_R f_n)= \phi(m\ot_{\Kc} f_0(c_0)\ot_R \cdots \ot_R f_n(c_n)).
\end{align*}
Here $f_i\in B$, for all $ 0\leq i\leq n$ and $\phi \in _{\Kc}C^n(A,M)$. The following computation shows that the map $\Psi_c$ is well-defined:
\begin{align*}
  &\Psi_c(\phi \boxtimes mk\ot_{\Kc} c_0\ot_R \cdots \ot_R c_n)(f_0\ot_R \cdots \ot_R f_n)\\
  &= \phi(mk\ot_{\Kc} f_0(c_0)\ot_R \cdots \ot_R f_n(c_n))\\
  &=\phi(m\ot_{\Kc} k\triangleright \big(f_0(c_0)\ot_R \cdots \ot_R f_n(c_n)\big))\\
  &= \phi(m\ot_{\Kc} k\ps{1}f_0(c_0)\ot_R \cdots \ot_R k\ps{n+1}f_n(c_n))\\
  &=\phi(m\ot_{\Kc} f_0(k\ps{1}c_0)\ot_R \cdots \ot_R f_n(k\ps{n+1}c_n))\\
  &=\Psi_c(\phi \boxtimes m\ot_{\Kc}k\ps{1} c_0\ot_R \cdots \ot_R k\ps{n+1}c_n))(f_0\ot_R \cdots \ot_R f_n)\\
  &=\Psi_c(\phi \boxtimes m\ot_{\Kc}k\triangleright( c_0\ot_R \cdots \ot_R c_n))(f_0\ot_R \cdots \ot_R f_n).\\
\end{align*}
We used the diagonal action in the third equality and $\Kc$-linearity of $f_i$'s in the forth equality.
\begin{proposition}
  The map $\Psi_c$ defines a cyclic map between the diagonal cocyclic module $C^{n,n}_{a,c}$ and the  cocyclic module of the algebra $B$, \ie $C^n(B)$.
\end{proposition}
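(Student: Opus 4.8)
The plan is to prove that $\Psi_c$ is a morphism of cocyclic modules, i.e. that it commutes with every coface, every codegeneracy and the cyclic operator of the two cocyclic modules; this is precisely what it means for $\Psi_c$ to be a cyclic map. Since $\Psi_c$ has already been shown to be well defined, all that remains is these intertwining relations. The common mechanism behind each of them is the interplay of three ingredients: the convolution product $(f\ast g)(c)=f(c\ps{1})\,g(c\ps{2})$ on $B$, the comultiplication $\Delta$ of the coring $C$ entering the faces $d_i$, and the evaluation $f_i\mapsto f_i(c_i)$ built into $\Psi_c$; the verifications also use that each $f_i\in B$ is $\Kc$-linear and $R$-linear, together with the SAYD axioms.

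First I would treat the inner cofaces $\delta_i\boxtimes d_i$ for $0\le i\le n$. Composing $\Psi_c$ with the algebra coface $\delta^a_i$ merges $f_i$ and $f_{i+1}$ into $f_i\ast f_{i+1}$, whose value on $c_i$ is $f_i(c_i\ps{1})\,f_{i+1}(c_i\ps{2})$; on the other side $d_i$ replaces $c_i$ by $c_i\ps{1}\ot_R c_i\ps{2}$ while $\delta_i$ multiplies the two corresponding $A$-entries. The two results coincide, giving $\delta^a_i\circ\Psi_c=\Psi_c\circ(\delta_i\boxtimes d_i)$. The codegeneracies are matched by the same token: $s_i$ applies the counit $\ve$ to $c_i$ and $\sigma_i$ inserts $1_A$, and because the unit of $B$ is $\eta_B=\eta_A\circ\ve_C$ with $\eta_A(r)=s(r)\triangleright 1_A$, axiom \eqref{cond3} guarantees that evaluating the inserted unit of $B$ on $c_i$ reproduces exactly the entry generated on the diagonal side.

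The heart of the argument, and the step I expect to be the main obstacle, is compatibility with the cyclic operators: $\Psi_c$ must intertwine the diagonal operator $\tau_n\boxtimes t_n$ with the plain cyclic permutation $\tau^a$. The delicate point is that two different coactions intervene. The coring operator $t_n$ sends $m\ot_{\Kc}\widetilde{c}$ to $m\ns{0}\ot_{\Kc}c_1\ot_R\cdots\ot_R c_n\ot_R m\ns{-1}c_0$, and when $\Psi_c$ evaluates the last slot I would use the $\Kc$-linearity of $f_n$ to rewrite $f_n(m\ns{-1}c_0)=m\ns{-1}\,f_n(c_0)$; then applying $\tau_n$ produces a second coaction on the base $m\ns{0}$. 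I would next invoke coassociativity of the $\Kc$-coaction to fuse the two coactions into a single iterated one, so that the accumulated $\Kc$-factors reorganise into the diagonal action $m\ns{-1}\triangleright\big(f_n(c_0)\ot_R f_0(c_1)\ot_R\cdots\ot_R f_{n-1}(c_n)\big)$. Pulling this diagonal action across $\ot_{\Kc}$ moves it onto the base, turning $m\ns{0}$ into $m\ns{0}m\ns{-1}$, and stability collapses this to $m$. The outcome is $\phi\big(m\ot_{\Kc}f_n(c_0)\ot_R f_0(c_1)\ot_R\cdots\ot_R f_{n-1}(c_n)\big)$, which is exactly $\tau^a\Psi_c(\phi\boxtimes m\ot_{\Kc}\widetilde{c})$ applied to $f_0\ot_R\cdots\ot_R f_n$. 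The genuinely careful part is the coassociativity bookkeeping, tracking which Sweedler component of $m\ns{-1}$ lands on which entry; this runs parallel to the computations establishing $\tau^{n+1}=\Id$ in the two preceding propositions.

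Finally, having verified the inner faces $\delta_i\boxtimes d_i$ for $0\le i\le n$, the codegeneracies, and the cyclic operator, I would not compute the wrap-around face $\delta_{n+1}\boxtimes d_{n+1}$ directly. In the cyclic category the top face is expressible through the cyclic operator and the zeroth face (for instance $\delta_{n+1}=\tau\,\delta_0$), so its compatibility with $\Psi_c$ follows formally once commutation with $\tau$ and with $\delta_0\boxtimes d_0$ is known. This establishes that $\Psi_c$ is a cyclic map and hence induces the desired pairing on cyclic cohomology, generalizing the Connes--Moscovici characteristic map to the $\times_R$-Hopf algebra setting.
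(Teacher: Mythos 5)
Your proposal is correct and follows essentially the same route as the paper: verifying the inner cofaces via the convolution product $(f_i\ast f_{i+1})(c_i)=f_i(c_i\ps{1})f_{i+1}(c_i\ps{2})$, the codegeneracies via $\eta_B=\eta_A\circ\ve_C$, and the cyclic operator via the $\Kc$-linearity of $f_n$, coassociativity of the coaction, the diagonal action across $\ot_{\Kc}$, and stability. Your remark that the top coface follows formally from $\tau$ and $\delta_0\boxtimes d_0$ makes explicit a point the paper leaves implicit, but the substance of the argument is the same.
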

\begin{proof}
First we show that $\Psi_c$ commutes with cofaces.
\begin{align*}
  &\Psi_c(\delta_i \boxtimes d_i(\phi \ot m\ot_{\Kc} c_0 \ot_R \cdots \ot_R c_n))(f_0\ot_R\cdots \ot_R f_{n+1})\\
  &=\Psi_c(\delta_i(\phi)\boxtimes d_i(m\ot_{\Kc} c_0 \ot_R \cdots \ot_R c_n))(f_0\ot_R\cdots \ot_R f_{n+1})\\
  &=\Psi_c(\delta_i(\phi)\boxtimes m\ot_{\Kc} c_0 \ot_R \cdots \ot_R c_i\ps{1}\ot c_i\ps{2}\ot_R c_n)(f_0\ot_R\cdots \ot_R f_{n+1})\\
  &=\delta_i(\phi)(m\ot_{\Kc} f_0(c_0) \ot_R \cdots \ot_R f_i(c_i\ps{1})\ot_R f_{i+1}(c_i\ps{2})\ot_R\cdots \ot_R f_{n+1}(c_n))\\
  &=\phi(m\ot_{\Kc} f_0(c_0) \ot_R \cdots \ot_R f_i(c_i\ps{1}) f_{i+1}(c_i\ps{2})\ot_R\cdots \ot_R f_{n+1}(c_n))\\
  &=\phi(m\ot_{\Kc} f_0(c_0) \ot_R \cdots \ot_R (f_i \ast f_{i+1})(c_i) \ot_R\cdots \ot_R f_{n+1}(c_n))\\
  &=(\delta^a_i\Psi_c(\phi\boxtimes m\ot_{\Kc} c_0 \ot_R \cdots \ot_R c_n))(f_0\ot_R\cdots \ot_R f_{n+1}).\\
\end{align*}
Here we show that $\Psi_c$ commutes with codegeneracies.
\begin{align*}
  &\Psi_c(\sigma_i \boxtimes s_i(\phi \ot m\ot_{\Kc} c_0 \ot_R \cdots \ot_R c_{n}))(f_0\ot_R\cdots \ot_R f_{n-1})\\
  &=\Psi_c(\sigma_i(\phi)\boxtimes s_i(m\ot_{\Kc} c_0 \ot_R \cdots \ot_R c_n))(f_0\ot_R\cdots \ot_R f_{n-1})\\
  &=\Psi_c(\sigma_i(\phi)\boxtimes m\ot_{\Kc} c_0 \ot_R \cdots\ot_R \ve(c_i) \ot_R \cdots \ot_R c_n)(f_0\ot_R\cdots \ot_R f_n)\\
  &=\sigma_i(\phi)\left( m\ot_{\Kc} f_0(c_0) \ot_R \cdots\ot_R \ve(c_i) \ot_R \cdots \ot_R f_{n-1}(c_n)\right)\\
  &=\phi\left( m\ot_{\Kc} f_0(c_0) \ot_R \cdots\ot_R \ve(c_i)1 \ot_R \cdots \ot_R f_{n-1}(c_n)\right)\\
  &=\phi\left( m\ot_{\Kc} f_0(c_0) \ot_R \cdots\ot_R \eta_B(c_i) \ot_R \cdots \ot_R f_{n-1}(c_n)\right)\\
  &=(\sigma^a_i\Psi_c(\phi\boxtimes m\ot_{\Kc} c_0 \ot_R \cdots \ot_R c_n))(f_0\ot_R\cdots \ot_R f_{n-1}).\\
\end{align*}
The following computation shows that $\Psi_c$ commutes with the cyclic maps.
  \begin{align*}
   & \Psi_c(\tau_n \boxtimes t_n(\phi \ot m\ot_{\Kc} c_0 \ot_R \cdots \ot_R c_n))(f_0\ot_R\cdots \ot_R f_n)\\
   &=\Psi_c(\tau_n(\phi)\boxtimes t_n(m\ot_{\Kc} c_0 \ot_R \cdots \ot_R c_n))(f_0\ot_R\cdots \ot_R f_n)\\
   &=\tau\phi(m\ns{0}\ot_{\Kc} f_0(c_1)\ot_R\cdots \ot_R m\ns{-1}f_n(c_0))\\
   &=\phi(m\ns{0}\ns{0}\ot_{\Kc} m\ns{-1}f_n(c_0)\ot_R m\ns{0}\ns{-1}f_0(c_1)\ot_R\cdots \ot_R m\ns{0}\ns{-n}f_{n-1}(c_n))\\
   &=\phi(m\ns{0}\ot_{\Kc} m\ns{-1}\ps{1}f_n(c_0)\ot_R m\ns{-1}\ps{2}f_0(c_1)\ot_R\cdots \ot_R m\ns{-1}\ps{n+1}f_{n-1}(c_n))\\
   &=\phi(m\ns{0}\ot_{\Kc} m\ns{-1}\triangleright(f_n(c_0)\ot_R f_0(c_1)\cdots\ot_R f_{n-1}(c_n))\\
   &=\phi(m\ot_{\Kc} f_n(c_0)\ot_R f_0(c_1)\cdots\ot_R f_{n-1}(c_n))\\
   &=\Psi_c(\phi\boxtimes m\ot_{\Kc} c_0\ot_R \cdots \ot_R c_n)(f_n\ot_R f_0\ot_R \cdots \ot_R f_{n-1})\\
   &=(\tau^a_n\Psi_c(\phi\boxtimes m\ot_{\Kc} c_0\ot_R \cdots \ot_R c_n)(f_0\ot_R  \cdots \ot_R f_{n}).
  \end{align*}

\end{proof}

We define an unital algebra map $\lambda: A\longrightarrow B=\Hom_{\Kc}(A,C)$ given by $\lambda(a)(c)=ca$. The condition  \eqref{cond1} implies the $\Kc$-linearity of the map $\lambda(a)\in B$ and therefore $\lambda$ is well-defined. The condition \eqref{cond2} shows that the map $\lambda$ is multiplicative and finally \eqref{cond3} proves that $\lambda$ is unital. Therefore we obtain  a map of cocyclic modules $\lambda : C^*(B,\mathbb{C})\longrightarrow C^*(A, \mathbb{C})$. We set
\begin{equation}
  \Psi:= \lambda \circ \Psi_c: C_{a,c}^{n,n}\longrightarrow C^*(A, \mathbb{C}),
\end{equation}
where
$$\Psi(\phi\boxtimes m\otimes_{\Kc} c_0\ot_R\cdots \ot_R c_n )(a_0\ot\cdots \ot a_n)=\varphi(m\ot_{\Kc} c_0 a_0\ot_R \cdots \ot_R c_n a_n).$$
This map is well-defined by $\Kc$- linearity of the $C$- action of $A$ which is defined in \eqref{cond1}.

\begin{theorem} \label{main}

Let $R$ be  an unital $\mathbb{C}$-algebra, $\Kc$ be a left $\times_R$-Hopf algebra, $M$ be a right-left SAYD module over $\Kc$, $A$ be a left $\Kc$-module algebra and  $C$ be a left $\Kc$-module coring.  Let $C$ acts on $A$ satisfying \eqref{cond1}, \eqref{cond2} and \eqref{cond3}. We have the following pairing on the level of cyclic cohomology,

  $$\sqcup: \widetilde{HC}_{\Kc}^{p}(A,M)\otimes HC_{\Kc}^{q}(C,M)\longrightarrow HC^{p+q}(A),$$ given by
$$\sqcup= \Psi AW,$$ where $AW$ is the Alexander-Whitney map. There are similar pairings for Hochschild and periodic cyclic cohomology.

\end{theorem}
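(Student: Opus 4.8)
The plan is to assemble the pairing from two ingredients that are, by this point, essentially in place: the cyclic map $\Psi$ and the Alexander--Whitney map; the theorem is then a formal consequence. First I would observe that the two preceding propositions, together with the fact that the algebra homomorphism $\lambda\colon A\to B$ induces a morphism of cocyclic modules $\lambda\colon C^\bullet(B)\to C^\bullet(A)$, show that the composite $\Psi=\lambda\circ\Psi_c$ is a morphism of cocyclic modules from the diagonal object $C^{\bullet,\bullet}_{a,c}={}_{\Kc}C^\bullet(A,M)\boxtimes{}_{\Kc}C^\bullet(C,M)$ to the cocyclic module $C^\bullet(A)$ of the algebra $A$. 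Since a morphism of cocyclic modules commutes with all cofaces, codegeneracies and the cyclic operator, it commutes with Connes' operators $b$ and $B$ and with the periodicity operator $S$; hence $\Psi$ induces a homomorphism
$$\Psi_{*}\colon HC^{p+q}\big(C^{\bullet,\bullet}_{a,c}\big)\longrightarrow HC^{p+q}(A),$$
and likewise on Hochschild and periodic cyclic cohomology. Granting the earlier propositions, this step is immediate.

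The second ingredient is the Alexander--Whitney map, which I would invoke from the general theory of cocyclic modules. For any two cocyclic modules $X^\bullet$ and $Y^\bullet$ the diagonal $X^n\otimes Y^n$ with operators $(\delta_n\otimes d_n,\ \sigma_n\otimes s_n,\ \tau_n\otimes t_n)$ is again a cocyclic module, exactly as was recorded for $C^{\bullet,\bullet}_{a,c}$. The cyclic Eilenberg--Zilber theorem then provides the Alexander--Whitney map comparing the Hochschild cochain complex of this diagonal with the tensor product of the two Hochschild complexes; being compatible, up to an explicit homotopy, with Connes' $(b,B)$-bicomplex, $AW$ induces the cup product
$$AW\colon \widetilde{HC}_{\Kc}^{p}(A,M)\otimes HC_{\Kc}^{q}(C,M)\longrightarrow HC^{p+q}\big(C^{\bullet,\bullet}_{a,c}\big),$$
together with its Hochschild and periodic analogues. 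I would stress that this construction is purely cyclic-categorical: it uses only that the two factors are cocyclic modules and is insensitive to the $\times_R$-Hopf algebra, the coefficients $M$, the module algebra $A$ or the module coring $C$, so one may cite it rather than reprove it.

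The pairing is then the composite $\sqcup=\Psi_{*}\circ AW$, and its existence on cyclic, Hochschild and periodic cyclic cohomology follows by chaining the two displays above, since both factors are available on all three theories. Because $\Psi$ respects the entire $SBI$ structure and $AW$ does so up to homotopy, the identical formula yields the Hochschild and periodic versions claimed at the end of the statement with no additional argument.

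The step I expect to be the main obstacle is the compatibility of $AW$ with the cyclic structure. With respect to the Hochschild coboundary $b$ the Alexander--Whitney map is a strict cochain map, but it commutes with the cyclic operator only up to a homotopy---this is precisely the content of the cyclic Eilenberg--Zilber theorem---and it is the bookkeeping of this homotopy that makes the passage from Hochschild to genuine cyclic cohomology delicate. As this difficulty concerns cocyclic modules alone and is independent of all the data specific to the present setting, I would dispatch it by appealing to the established cyclic Eilenberg--Zilber machinery instead of performing a direct computation.
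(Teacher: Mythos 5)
Your proposal is correct and follows essentially the same route as the paper: the paper gives no separate argument for Theorem \ref{main} beyond defining $\sqcup=\Psi\, AW$, relying exactly as you do on the preceding propositions (that $\Psi_c$ is a cyclic map into $C^\bullet(B)$ and that $\lambda$ induces a morphism $C^\bullet(B)\to C^\bullet(A)$) together with the cyclic Eilenberg--Zilber/Alexander--Whitney machinery for the diagonal cocyclic module. Your explicit discussion of why $AW$ descends to cyclic, Hochschild and periodic cyclic cohomology only makes precise what the paper leaves implicit.
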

As an application of the pairing defined in  Theorem \ref{main}, the following corollary generalizes the Connes-Moscovici characteristic map for $\times$-Hopf algebras. This characteristic map enables us to obtain algebra cocycles by having $\times$-Hopf algebra cocycles.
\begin{corollary}
Under the conditions of Theorem \ref{main}, we have the following characteristic map on the level of cyclic cohomology.
\begin{align}\label{characteristic map}
  &~~~~~~~~~~~~~~~~~~~~~~~~~~~~HC^n_{\Kc}(\Kc, R)\longrightarrow HC^n(A)\\\nonumber
  &Tr(k_0\ot_R \cdots \ot_R k_n)(a_0\ot_R\ot \cdots \ot_R)= Tr(a_0 k_0(a_1) \cdots k_n(a_n)).
\end{align}
\end{corollary}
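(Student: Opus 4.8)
The plan is to realize the characteristic map as a special case of the pairing of Theorem~\ref{main}, taking the module coring to be the $\times_R$-Hopf algebra $\Kc$ acting on itself and the SAYD module to be the ground algebra $R$ with the structure ${}^\sigma R_\delta$ of Example~\ref{sayd-eg1}. First I would check that $\Kc$, acting on itself by left multiplication $k\vartriangleright c=kc$, is a left $\Kc$-module coring. Left $\Kc$-linearity of $\Delta$ is exactly bialgebroid axiom (ii): $\Delta(kc)=k\ps{1}c\ps{1}\ot_R k\ps{2}c\ps{2}$ is the diagonal action of $k$ on $\Delta(c)$. Left $\Kc$-linearity of $\varepsilon$ is precisely axiom (iii): $\varepsilon(kc)=\varepsilon(k\mathfrak{s}(\varepsilon(c)))=k\vartriangleright\varepsilon(c)$. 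Thus $C=\Kc$ meets the requirements of a module coring.

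Next I would verify that the module-algebra action of $\Kc$ on $A$ furnishes a left $C$-action of $C=\Kc$ on $A$ satisfying \eqref{cond1}, \eqref{cond2} and \eqref{cond3}. Condition \eqref{cond1}, reading $(kc)\vartriangleright a=k\vartriangleright(c\vartriangleright a)$, is associativity of the $\Kc$-action; \eqref{cond2} is the module-algebra axiom (ii); and \eqref{cond3} is the module-algebra axiom (i). With these checks, every hypothesis of Theorem~\ref{main} holds for $(\Kc,R,A,C=\Kc)$, so the pairing
$$\sqcup:\widetilde{HC}_{\Kc}^{p}(A,R)\otimes HC^q_{\Kc}(\Kc,R)\longrightarrow HC^{p+q}(A)$$
is defined, and $HC^q_{\Kc}(\Kc,R)$ is computed by the simplified cocyclic module recorded in the Example preceding this subsection.

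To extract the characteristic map I would freeze the algebra factor in degree zero. A cyclic $0$-cocycle in ${}_{\Kc}C^0(A,R)=\Hom_R(R\ot_{\Kc}A,R)$ is precisely a $\delta$-invariant $\sigma$-twisted trace $\mathrm{Tr}$ on $A$; here I would first confirm that the trace property together with the invariance encoded in ${}^\sigma R_\delta$ is equivalent to the vanishing of the degree-zero coface and cyclic differentials of ${}_{\Kc}C^\bullet(A,R)$. The characteristic map is then $\mathrm{Tr}\,\sqcup\,-\colon HC^n_{\Kc}(\Kc,R)\to HC^n(A)$, a genuine map of cyclic cohomologies because $\sqcup$ is. Finally I would make the formula explicit: with the coring factor represented by $k_1\ot_R\cdots\ot_R k_n$ in the simplified complex, substituting $c_i=k_i$ and $c_i a_i=k_i\vartriangleright a_i$ into the definition of $\Psi$ and contracting against $\mathrm{Tr}$ yields $(a_0\ot\cdots\ot a_n)\mapsto \mathrm{Tr}\big(a_0\,(k_1\vartriangleright a_1)\cdots(k_n\vartriangleright a_n)\big)$, the asserted trace formula (up to the indexing convention for the simplified complex).

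I expect no conceptual obstacle, since the statement is a specialization; the effort lies almost entirely in the bookkeeping of the last step. The main point requiring care is matching the degree shift of the Alexander--Whitney map against the frozen degree-zero trace and disentangling the two roles of the symbol $\mathrm{Tr}$, after which the cocycle and cyclicity conditions of the output are inherited directly from Theorem~\ref{main} and need no independent verification.
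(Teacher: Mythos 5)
Your proposal is correct and takes essentially the same route as the paper: specialize Theorem \ref{main} to $M={}^{\sigma}R_{\delta}$, $C=\Kc$ and $p=0$, identify cyclic $0$-cocycles in ${}_{\Kc}C^{0}(A,R)\cong A^{*}$ with $\delta$-invariant $\sigma$-traces, and cup the trace class with $HC^{n}_{\Kc}(\Kc,R)$ computed via the simplified complex. Your only additions are checks the paper leaves implicit --- that $\Kc$ acting on itself by left multiplication is a left $\Kc$-module coring (bialgebroid axioms (ii) and (iii)) and that the module-algebra action satisfies \eqref{cond1}--\eqref{cond3} --- together with correctly flagging the indexing slip in the displayed formula, where the simplified complex has $n$ factors $k_{1}\ot_{R}\cdots\ot_{R}k_{n}$ rather than $n+1$.
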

\begin{proof}
In  Theorem \ref{main}, let $M=R$.
As a result we have $$C^0_{\Kc}(A,R)=R\ot_R A\cong A,$$ where the isomorphism is given by $r\ot a\longmapsto \mathfrak{s}(r)\triangleright a$ and $1_R\ot a\longleftarrow a$. Therefore a $0$-Hopf cocycle is an  $\Kc$-linear map  $Tr: A\longrightarrow R$ where
\begin{equation}
  Tr(\mathfrak{s}(r)\triangleright(a_1 a_2))= Tr(a_2(\mathfrak{s}(r)\sigma \triangleright a_1)), \quad r\in R, a_1,a_2\in A.
\end{equation}
If $R$ is unital we obtain
\begin{equation}\label{sigma}
  Tr(a_1 a_2)= Tr(a_2(\sigma \triangleright a_1)), \quad  a_1,a_2\in A.
\end{equation}
Such  a trace map is called a $\sigma$-trace.  Also $Tr$ is a right $\Kc$-linear map. Since $R$ is unital we have $Tr( k\triangleright a)= Tr(a)\triangleleft k$. Using the definition of the action defined in \eqref{RSAYD} we have
\begin{equation}\label{delta}
Tr(k\triangleright a)= \delta(\mathfrak{s}(Tr(a))k).
\end{equation}
Such a trace map is called a $\delta$-trace. One notes when $\Kc$ is a Hopf algebra, this condition is equivalent to $Tr(ha)= \delta(h)Tr(a)$. In fact a $\sigma$-trace which is $\delta$-invariant is a $0$-cocycle.
Therefore  for $p=0$ in  Theorem \ref{main}   we obtain the characteristic  map defined in \eqref{characteristic map}.
\end{proof}

\section{Examples}
In this section we apply  the theoretical results in Section $2$ to some  examples of $\times$-Hopf algebras such as enveloping algebras, quantum algebraic tori, the Connes-Moscovici Hopf algebroid and the Kadison bialgebroids.
\subsection{Enveloping algebra}

Let $R$ be an algebra over the field of complex numbers. The simplest example of a left $\times_R$-Hopf algebra  which is not a Hopf algebra is $\mathcal{K}=R^e=R\ot R^{op}$  with the source and  target maps defined by
\begin{equation*}
\mathfrak{s}: R\longrightarrow \mathcal{K}, \quad r\longmapsto r\ot 1; \quad \mathfrak{t}: R^{op}\longrightarrow \mathcal{K}, \quad r\longmapsto 1\ot r,
\end{equation*}
 comultiplication defined by
 \begin{equation*}
\Delta:\mathcal{K}\longrightarrow \mathcal{K}\ot_R \mathcal{K}, \quad r_1\ot r_2\longmapsto (r_1\ot 1)\ot_R (1\ot r_2),
\end{equation*}
 counit given by
$$ \varepsilon:\mathcal{K}\longrightarrow R, \quad \varepsilon(r_1\ot r_2)=r_1r_2,  $$ and
$$\nu((r_1\ot r_2)\ot(r_3\ot r_4))= r_1\ot 1\ot r_3\ot r_4r_2,$$
$$\nu^{-1}((r_1\ot r_2)\ot(r_3\ot r_4))=r_1\ot 1\ot r_2r_3\ot r_4, $$ where $r,r_1, r_2, r_3, r_4 \in R.$\\

 \emph{ \textbf{SAYD modules over the enveloping algebras}}:
As a special case of  Example \ref{sayd-eg1} when $\Kc= R^e$,  it is shown  in \cite{HR1}  that a homogenous element $x\ot y\in R^e$ is a group-like element if and only if $xy=yx=1$. Furthermore if $\delta $ be a character then $\delta(r\ot r')= rr'$ for all $r\ot r'\in \Kc$. As a result
$^{x\ot x^{-1}}R_{\delta}$ is a right-left SAYD module over the $\times_R$-Hopf algebra $\Kc= R^e$ by the following action and coaction;
\begin{equation}
  r_2 r r_1= r \triangleleft (r_1\ot r_2), \quad ~~~~~~~r\longmapsto (rx\ot x^{-1})\ot 1,
\end{equation}
where $r,r_1,r_2, \in R$ and $x$ is an element of the center of $R$.
\\

\emph{\textbf{Cyclic cohomology of the universal algebra $R^e$}}:
   For any unital algebra $R$ over the field of complex numbers $\mathbb{C}$, we have
  \begin{equation}
   C^n_{R^e}(R^e,R) =R\ot_{R^e}\ot \underbrace{ R^e\ot_R \cdots \ot_R R^e}_{n+1~times}\cong \underbrace{R\ot \cdots \ot R}_{n~times}\ot R^{op}.
  \end{equation}
Similar to  \cite{CM01}, we use the unit map to define a map of cocyclic modules,
$$C^n(\mathbb{C}\ot \mathbb{C}^{op})\longrightarrow C^n(R\ot R^{op}).$$
   Now we  fix an unital linear functional $\varphi\in R^*$ to define a homotopy map $\mathbf{s}: C^{n}(R^e)\longrightarrow C^{n-1}(R^e)$ which is given by
  \begin{equation}
    \mathbf{s}(r_1\ot \cdots \ot r_n)= \varphi(r_1)r_2\ot \cdots \ot r_n,
  \end{equation}
  where $r_1,\cdots ,r_n\in R$. One easily checks that $\mathbf{s}$ commutes with face maps and therefore we obtain an isomorphism on the level of Hochschild and consequently  cyclic cohomology for the cocyclic modules   $C^n(\mathbb{C}\ot \mathbb{C}^{op})$ and $C^n(R\ot R^{op})$. Therefore we have,
  \begin{equation}
    HC^*(R\ot R^{op})\cong HC^*(\mathbb{C}\ot \mathbb{C}^{op})\cong HC^*(\mathbb{C}).
  \end{equation}
 This shows that the characteristic map is trivial in this case.


\subsection{Quantum algebraic torus}

  The Laurent polynomial ring in two variables,  $\mathbb{C}[U,V,U^{-1}, V^{-1}]$, is  the group ring of $\mathbb{Z}\times\mathbb{Z}$ and thus acquires a Hopf algebra structure.
   We consider a well-known deformation of this Hopf algebra which is not a Hopf algebra anymore and it is called algebraic quantum torus  denoted by $A_{\theta}$. Let us recall that $A_{\theta}$ is an unital algebra over $\mathbb{C}$ generated by two invertible elements $U$ and $V$ satisfying $UV=qVU$, where $q=e^{2\pi i\theta}$ and $\theta$ is a real number. Let $R=\mathbb{C}[U,U^{-1}]$ be the algebra of Laurent polynomials. We define $\alpha=\beta: R\longrightarrow A_{\theta}$ to be the natural embedding. One defines a coproduct $\Delta:A_{\theta}\longrightarrow A_{\theta}\ot_R A_{\theta}$ given by
  \begin{equation}
    \Delta(U^nV^m)=U^n V^m\ot_R V^m.
  \end{equation}
  The counit map $\ve: A_{\theta}\longrightarrow R$ is given by
  \begin{equation}
    \ve(U^nV^m)=U^n.
  \end{equation}
 Since the counit map is not an algebra map the quantum torus $A_{\theta}$ is not an bialgebra. Instead  it is  a left $\times_R$-bialgebroid by the coring structure defined above. Furthermore the following map
\begin{align*}
&  ~~~~~~~~~~~~~~~\nu: A_{\theta}\ot_{R^op} A_{\theta} \longrightarrow A_{\theta}\ot_R A_{\theta}\\
& U^nV^m\ot U^rV^s\longmapsto U^nV^m \ot V^m U^r V^s= q^{-mr}U^n V^m\ot U^r V^{s-m},
\end{align*}
is  bijective where the inverse map is defined by
\begin{equation}
 \nu^{-1}: U^nV^m\ot U^rV^s\longmapsto U^nV^m \ot V^{-m}U^r V^s= q^{mr} U^nV^m \ot U^rV^{s-m}
\end{equation}
 This turns $A_{\theta}$  into  a left $\times_R$-Hopf algebra.\\

 \emph{ \textbf{SAYD modules over the quantum algebraic torus}}:
  It is obvious that every element of the form $V^m$ is a group-like element in $A_{\theta}$. Furthermore, the map $: \delta: A_{\theta}\longrightarrow \mathbb{C}[U,U^{-1}]$ which is given by
  \begin{equation}
    \delta(U^nV^m)= q^{nm}U^n, \quad \quad~~~~~~~~~~~~U^nV^m\in A_{\theta},
  \end{equation}
  is a right character.
The only group-like element which satisfies the stability condition with respect to this right character is the unit element.
As a result of \eqref{RSAYD},  the following action and coaction endow $^1R_{\delta}=\mathbb{C}[U,U^{-1}]$ with  a SAYD structure on $A_{\theta}$ as follows;

\begin{equation}\label{coaction-torus}
  U^k\triangleleft U^nV^m= q^{(k+n)m}U^{k+n},\quad ~~~~~    U^n\longmapsto U^n\ot 1.
\end{equation}\\

  \emph{\textbf{Cyclic cohomology of  the quantum algebraic torus}}:
A normal Haar system $\varrho:A_{\theta}\longrightarrow \mathbb{C}[U,U^{-1}]$ was introduced for the bialgebroid structure  of the quantum algebraic torus in \cite{kr3} as follows;
\begin{equation}
  \varrho(U^nV^m)= \delta_{m,0} U^n.
\end{equation}
Therefore we have a contracting homotopy as defined in \cite{kr3} and  we obtain;
\begin{equation}
  HC^{2i+1}(A_{\theta})=0, \quad HC^{2i}(A_{\theta})=\mathbb{C}[U,U^{-1}], \quad \text{for all}\quad i\geq 0
\end{equation}
\\

In Theorem \ref{main}, let $\mathcal{K}= A_{\theta} $, $M=R=\mathbb{C}[U,U^{-1}]$ and $p=0$. One obtains the following non-trivial characteristic map;
\begin{equation}
  \mathbb{C}[U,U^{-1}]\longrightarrow HC^{2n}(A_{\theta}).
\end{equation}

\subsection{The Connes-Moscovici Hopf-algebroid}

  Here we show that the Connes-Moscovici Hopf algebroid defined in \cite{CM01} is a $\times$-Hopf algebra. Let $M$ be a smooth manifold of dimension $n$ with a finite atlas and  $FM$ be the frame bundle on $M$. Let $\Gamma_M$ denotes the pseudogroup of all local diffeomorphims $M$ where its elements are partial diffeomorphisms $\psi: \hbox{Dom}\psi\longrightarrow \hbox{Rang} \psi$, where the domain and the range of $\psi$ are both open subsets of $M$. One can lift $\psi\in \Gamma_M$ to the frame $FM$. This prolongation is denoted by  $\widetilde{\psi}$. We set
$$FM \rtimes \Gamma_M:= \{ (u,\widetilde{\varphi}), \quad \varphi\in \Gamma_M, \quad u\in    \hbox{Rang}\, \varphi  \},$$ and
$$FM ~\overline{\rtimes}~ \Gamma_M:= \{ [u,\widetilde{\varphi}], \quad \varphi\in \Gamma_M, \quad u\in \hbox{Rang}\, \varphi  \},$$ where $[u,\widetilde{\varphi}]$ stands for the class of $(u,\widetilde{\varphi})\in FM \rtimes \Gamma_M$ with respect to the following equivalence relation;
$$ (u,\widetilde{\varphi})\sim (v,\widetilde{\psi}), \quad \text{if} \quad u=v \quad \text{and}\quad \widetilde{\varphi}\mid_{W}= \widetilde{\psi}\mid_{W}.$$
Here $W$ is an open neighborhood of $u$. Let
\begin{equation}
\mathcal{A}_{FM}:= C^{\infty}_c(FM ~\overline{\rtimes}~ \Gamma_M),
\end{equation}
 be the smooth convolution algebra. Every element of this algebra is  linearly spanned by the monomials of the form $f U^*_{\psi}$ where $f\in C^{\infty}_c(Dom\widetilde{\psi})$. One has
$$f_1 U^*_{\psi_1}=f_2 U^*_{\psi_2}\quad \text{iff}\quad f_1=f_2 \quad \text{and}\quad \psi_1|_V= \psi_2|_V,$$ where $V$ is  a neighborhood of $Supp(f_1)=Supp(f_2)$. A multiplication is defined on $\mathcal{A}_{FM}$ by
\begin{equation}
f_1 U^*_{\psi_1}. f_2 U^*_{\psi_2}=f_1(f_2\circ \widetilde{\psi_1})U^*_{\psi_2 \psi_1}.
\end{equation}
We define the following algebra
\begin{equation}
  \mathcal{R}_{FM}= C^{\infty}(FM),
\end{equation}
which acts from left on $\mathcal{A}_{FM}$ by
\begin{equation}\label{leftopr}
  r\triangleright f U^*= r.f U^*, \quad r\in \mathcal{R}_{FM},
\end{equation}
and from right by
\begin{equation}\label{rightopr}
  f U^*\triangleleft r= (r\circ \widetilde{\psi}). fU^*, \quad r\in \mathcal{R}_{FM}.
\end{equation}
In fact we obtain the source map $\alpha: \mathcal{R}_{FM}\longrightarrow \mathcal{A}_{FM}$ using the left action and the target map $\beta: \mathcal{R}_{FM}^{op}\longrightarrow \mathcal{A}_{FM}$ by the right action. Also we consider the action of an arbitrary  vector field $Z$ over $FM$ which is given by
\begin{equation}\label{compopr}
  Z(fU^*)=Z(f)U^*, \quad fU^*\in \mathcal{A}_{FM}.
\end{equation}
One notes that although a vector field acts by derivations on functions on the frame bundle, the action of the vector field on $\mathcal{A}_{FM}$ is not a derivation anymore.
Now let
\begin{equation}
  \mathcal{H}_{FM}\subset \mathcal{L}(\mathcal{A}_{FM}),
\end{equation}
denotes those elements of the subalgebra of linear operators on $\mathcal{A}_{FM}$ which are generated by the three types of transformations; left multiplication, right multiplication and composition   given in \eqref{leftopr},\eqref{rightopr} and \eqref{compopr}. The elements of $\mathcal{H}_{FM}$ are called transverse differential operators on the groupoid $FM ~\overline{\rtimes}~ \Gamma_M$. One notes that $\alpha:\mathcal{R}_{FM}\longrightarrow \mathcal{H}_{FM} $ and $\beta: \mathcal{R}_{FM}^{op}\longrightarrow \mathcal{H}_{FM}$ endow $\mathcal{H}_{FM}$ with a $\mathcal{R}_{FM}$-bimodule structure. To define a $\mathcal{R}_{FM}$-coring structure on $\mathcal{H}_{FM}$, it is proved in \cite{CM01} that $\mathcal{H}_{FM}$ has a Poincare-Birkhoff-Witt-type basis over $\mathcal{R}_{FM} \ot \mathcal{R}_{FM}$ by fixing a torsion free connection on $FM$. To recall this basis, let $X_1,\cdots ,X_n$ denote  the standard vector fields corresponding to the standard basis of $\mathbb{R}^n$ and $\{Y_i^j\}$ be the fundamental vertical vector fields corresponding to the standard basis of $gl(n,\mathbb{R})$. These $n^2+n$ vectors form a basis for the tangent space of $FM$ at all points. Let $\delta_{jk}^i\in \mathcal{L}(\mathcal{A}_{FM})$ be the operators of multiplication defined in \cite{CM01}. It is proven in \cite{CM01}[Proposition 3] that transverse differential operators $\mathcal{Z}_{I}. \delta_{\kappa}$ form a basis for $\mathcal{H}_{FM}$ over $\mathcal{R}_{FM} \ot \mathcal{R}_{FM}$, where
\begin{equation}
  \mathcal{Z}_{I}=X_{i_1}\cdots X_{i_p}Y_{k_1}^{j_1} \cdots Y_{k_q}^{j_{q}}, \quad \text{and} \quad \delta_{\kappa}=\delta_{j_1k_1;\ell_1^{1}\cdots \ell_{p_1}^{1}}^{i_1}\cdots \delta_{j_rk_r;\ell_1^{r}\cdots \ell_{p_r}^{r}}^{i_r},
\end{equation}
and
\begin{equation}
  \delta_{jk;\ell_1\cdots \ell_{p_1}}^{i}= [X_{\ell_r}\cdots [X_{\ell_1},\delta_{jk}^i],\cdots].
\end{equation}
We refer the reader to \cite{CM01}[Proposition 3] for definition of multi-indices $I$ and $\kappa$. In order to define the coproduct, they have shown that the generators of $\mathcal{H}_{FM}$ acts on $\mathcal{A}_{FM}$ as a module algebra. This leads us to define a coproduct $\Delta_{FM}$ on $\mathcal{H}_{FM}$ which is not well-defined on $\mathcal{H}_{FM}\ot \mathcal{H}_{FM}$, instead the ambiguity disappears in the tensor product over $\mathcal{R}_{FM}$. In the next step, the counit is defined in \cite{CM01}[Proposition 7] by
\begin{equation}
  \ve_{FM}: \mathcal{H}_{FM}\longrightarrow \mathcal{R}_{FM}, \quad \ve(h)= h\triangleright 1.
\end{equation}
Finally the authors in \cite{CM01}[proposition 8] defined a twisted antipode $\widetilde{S}_{FM}$ by defining a faithful trace $\tau: \mathcal{A}_{FM}\longrightarrow \mathbb{C}$. As a result, $\mathcal{H}_{FM}$ is a $\times_{\mathcal{R}_{FM}}$-Lu's Hopf algebroid by
\begin{equation}
  (\Delta_{FM},\ve_{FM},\widetilde{S}_{FM} ).
\end{equation}
The following canonical map
\begin{equation}
  \mathcal{H}_{FM}\ot_{\mathcal{R}_{FM}^{op}}\mathcal{H}_{FM}\longrightarrow \mathcal{H}_{FM}\ot_{\mathcal{R}_{FM}}\mathcal{H}_{FM},
\end{equation}
which is given by
\begin{equation}
 h\ot h'\longmapsto h\ps{1}\ot h\ps{2}h'
\end{equation}
defines a $\times_{\mathcal{R}_{FM}}$-Hopf algebra structure on $\mathcal{H}_{FM}$. Here we mention a special case of the Connes-Moscovici Hopf algebroid when $M=\mathbb{R}^n$ is the flat Euclidean space. It is proved in \cite{cm-rankin} that
\begin{equation}
  \mathcal{H}_{F\mathbb{R}^n} \cong \mathcal{R}_{F\mathbb{R}^n} \rtimes \mathcal{H}_{n} \ltimes \mathcal{R}_{F\mathbb{R}^n},
\end{equation}
is a Hopf algebroid where $\Hc_n$ is the Connes-Moscovici Hopf algebra \cite{CM98}. In fact it can be shown that it is a left $\times$-Hopf algebra as follows.
Generally speaking, if $H$ is a Hopf algebra and $A$ is a $H$-module algebra then
\begin{equation}
  \mathcal{H}_{CM}= A \rtimes H \ltimes A^{op},
\end{equation}
is a left $\times_A$-Hopf algebra, called the Connes-Moscovici $\times$-Hopf algebra,  by the following structure. The algebra structure is given by
\begin{equation}
  (a\rtimes h\ltimes b)\cdot(a'\rtimes h'\ltimes b')= a(h\ps{1}a')\rtimes h\ps{2}h'\ltimes (h\ps{3}b')b
\end{equation}
The source  and target maps $\alpha: A\longrightarrow \Hc$ and $\beta:A^{op}\longrightarrow \Hc$  are given by
\begin{equation}
  \alpha(a)= a\rtimes 1\ltimes 1, \quad  \beta(a)= 1\rtimes 1\ltimes a.
\end{equation}
The coring structure is given by the following coproduct;
\begin{equation}\label{coring-CM}
  \Delta(a\rtimes h \ltimes b)= (a\rtimes h\ps{1} \ltimes 1)\ot_A ( 1\rtimes h\ps{2}\ltimes b).
\end{equation}
The counit $\ve: \Hc_{CM}\longrightarrow A$ is defined by
\begin{equation}
\ve(a\rtimes h\ltimes b)= a\ve(h)b.
\end{equation}
Furthermore we define $\nu: \Hc_{CM}\ot_{A^{op}} \Hc_{CM} \longrightarrow \Hc_{CM} \ot_A \Hc_{CM} $  by
\begin{equation}
  \nu((a\rtimes h\ltimes b) \ot_{A^{op}} (a'\rtimes h'\ltimes b'))= (a\rtimes h\ps{1} \ltimes 1) \ot_A \left(h\ps{2}a'\rtimes h\ps{3}h'\ltimes (h\ps{4}b')b\right),
\end{equation}
with the inverse map given by
\begin{equation}
  \nu^{-1}((a\rtimes h\ltimes b) \ot_{A} (a'\rtimes h'\ltimes b'))=(a\rtimes h\ps{1}\ltimes 1)\ot_{A^{op}}\left(S(h\ps{4})\triangleright (ba) \rtimes S(h\ps{4})h'\ltimes S(h\ps{2})\triangleright b'\right).
\end{equation}\\

\emph{ \textbf{SAYD modules over the Connes-Moscovici $\times$-Hopf algebra}}:
  Suppose the homogenous element $a\ot h\ot b\in A\ot H\ot A^{op}$ is a group-like element. Using the $A$-coring structure defined in \eqref{coring-CM}
  we obtain;
  \begin{equation}
   ( a\ot h\ps{1}\ot 1)\ot (1\ot h\ps{2}\ot b)= (a\ot h\ot b)\ot (a\ot h\ot b).
  \end{equation}
  This implies that $a=b=1$ and therefore $h$ is a group-like element of $H$.
        Therefore the homogenous group-like elements  of the Connes-Moscovici $\times$-Hopf algebra $H_{CM}=A\ot H\ot A^{op}$ are of the form $1\ot \sigma\ot 1$ where $\sigma$ is a group-like element of the  Hopf algebra $H$. We define  the map $\delta:H_{CM}\longrightarrow A$,  by
  \begin{equation}
    \delta(a\ot h\ot b)=\ve(h)f(ba),
  \end{equation}
where $f:A\longrightarrow A$ is an unital algebra map satisfying $f^2=f$ and $f(h\triangleright a)=\ve(h)f(a)$. One can check that $\delta$ is a right character.\\

 \emph{\textbf{Cyclic cohomology of the Connes-Moscovici  $\times$-Hopf algebras}}:
  For the Connes-Moscovici  $\times$-Hopf algebras we have;

  \begin{equation}
    HC^*(A\ot H\ot A^{op})\cong HC^*(A^e\ot H)\cong HC^*(H).
  \end{equation}
Here  the first isomorphism is the result of  Lemma \ref{lemma-isom}.  In fact as it is shown in \cite{CM01}, there is a $R$-bimodule isomorphism for the Connes-Moscovici Hopf algebra as follows;
\begin{equation}
 H_{CM}= R\ot H\ot R^{op}\cong \alpha(R)\ot \beta(R)\ot H= R\ot R^{op}\ot H.
\end{equation}
Therefore one can transfer the $\times$-Hopf algebra structure to $R\ot R^{op}\ot H$. It is easily observed that
\begin{equation}
  \Delta_{H_{CM}}= \Delta_{R^e}\ot \Delta_{H}, \quad \text{and} \quad \ve_{H_{CM}}= \ve_{R^e}\ot \ve_{H}.
\end{equation}
This shows that $H_{CM}$ is actually isomorphic to the external tensor product between the bicoalgebroid $R^e$ and the coalgebra $H$ over the complex numbers. In fact we have;
\begin{equation}
  \delta_{H_{CM}}=\delta_{R^e}\ot \delta_{H}, \quad \text{and} \quad \sigma_{H_{CM}}=\sigma_{R^e}\ot \sigma_{H}.
\end{equation}
By applying the Eilenberg-Zilber theorem we obtain
\begin{equation}
  HH^*(H_{CM})\cong HH^*(R^e)\ot HH^*(H)\cong HH^*(H),
\end{equation}
where the second isomorphism is obtained by our computations in  previous subsection. In fact the composition of the two isomorphism is given by the canonical inclusion homomorphism $f: H\longrightarrow H_{CM}$ which is also a morphism of $\times$-Hopf algebras. This enables us to obtain a map of cocyclic modules $_HC^*(H,\mathbb{C})\longrightarrow _{H_{CM}}C^*(H_{CM},R)$. Therefore the Connes-Long exact sequence of cocyclic modules relating Hochschild and cyclic cohomology implies the isomorphism on the level of cyclic cohomology.\\

   If in  Theorem \ref{main}  we set    $C=\Kc=A\ot H\ot A^{op}$, $M=A$ and $p=0$ we obtain the original  Connes-Moscovici characteristic map;
\begin{equation}
  HC^*(H)\longrightarrow HC^*(A).
\end{equation}

\subsection{The Kadison bialgebroid}
In this subsection we show that the Kadison bialgebroid $ (A\ot A^{op})\bowtie H$ introduced in \cite{kad} is a $\times_A$-Hopf algebra. Here $H$ is a Hopf algebra and $A$ is a left $H$-module algebra. First we recall the bialgebroid structure. The source map $\alpha: A\longrightarrow (A\ot A^{op})\bowtie H$ is given by
\begin{equation}
  a\longmapsto (a\ot 1)\ot 1.
\end{equation}
 The target map $\beta: A \longrightarrow (A\ot A^{op})\bowtie H$ is given by
 \begin{equation}
   a\longmapsto (1\ot a)\ot 1.
 \end{equation}
 The algebra structure is given by the following multiplication rule;
 \begin{equation}
   (a\ot b\ot h)\cdot (a'\ot b'\ot h')= a(h\ps{1}\triangleright a')\ot b'(S(h'\ps{2})\triangleright b)\ot h\ps{2}h'\ps{1}.
 \end{equation}
 The comultiplication is given by
 \begin{equation}
   (a\ot b)\ot h\longmapsto ((a\ot 1)\ot h\ps{1})\ot_A ((1\ot b)\ot h\ps{2}).
 \end{equation}
 Furthermore the counit $\ve: (A\ot A^{op})\bowtie H\longrightarrow A$
 is given by
 \begin{equation}
   (a\ot b)\ot h\longmapsto a(h\triangleright b).
 \end{equation}
 We define $\nu: A^{e}\ot H\ot_{A^{op}} A^{e}\ot H\longrightarrow A^{e}\ot H\ot_{A} A^{e}\ot H$ by
\begin{equation}
  (a\ot b\ot h)\ot_{A^{op}} (a'\ot b'\ot h')\longmapsto (a\ot 1 \ot h\ps{1})\ot_A \left((h\ps{2}\triangleright a')\ot b'(S(h'\ps{2})\triangleright b)\ot h\ps{3}h'\ps{1}\right),
\end{equation}
with the following inverse map;
\begin{equation}
  (a\ot b\ot h)\ot_{A} (a'\ot b'\ot h')\longrightarrow (a\ot 1\ot h\ps{1})\ot_{A^{op}} \left( (b\ot 1\ot S(h\ps{2})\cdot(a'\ot b'\ot h')\right)
\end{equation}

It is proved by Panaite and Van Oystaeyen in \cite{pvo} that the Connes-Moscovici bialgebroid  is isomorphic to the Kadison bialgebroid;
\begin{equation}
  A \rtimes H \ltimes A^{op}\cong (A\ot A^{op})\bowtie H.
\end{equation}
This isomorphism is given by
\begin{equation}\label{isom1}
  \chi: (A\ot A^{op})\bowtie H\longrightarrow A\ot H\ot A^{op}, \quad a\ot b\ot h\longmapsto  a\ot h\ps{1}\ot h\ps{2}b,
\end{equation}
and
\begin{equation}
  \chi^{-1}: a\ot h\ot b\longmapsto a\ot S(h\ps{2})\triangleright b\ot h\ps{1}.
\end{equation}
It is mentioned in \cite{pvo} that if $S^2=\Id$ then the map $\chi$ induces an isomorphism  on the level of B\"ohm-Szlach\'anyi Hopf algebroid.
We recall that two $\times$-Hopf algebras $\Kc$ and $\Hc$  are isomorphic if there exists a map $ \zeta: \Kc\longrightarrow \Hc$ which commutes with all bialgebroid structures and furthermore $\zeta \nu= \nu \zeta$. Similarly one has the following statement.

\begin{proposition}\label{lemma-isom}
 The Connes-Moscovici and the Kadison $\times$-Hopf algebras $A \rtimes H \ltimes A^{op}$ and $ (A\ot A^{op})\bowtie H$ are isomorphic.
\end{proposition}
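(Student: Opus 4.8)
The plan is to promote the Panaite--Van Oystaeyen map $\chi$ of \eqref{isom1} from an isomorphism of bialgebroids to an isomorphism of $\times$-Hopf algebras, using the definition recalled just before the statement. The conceptual point driving the whole argument is that, for any left $\times_R$-bialgebroid, the Galois map $\nu$ of \eqref{aa} is \emph{not} independent data: it is canonically assembled from the multiplication and the comultiplication by $\nu(k\ot_{R^{op}}k')=k\ps{1}\ot_R k\ps{2}k'$. Indeed, one checks that the explicit formulas given above for $\nu_{\mathrm{CM}}$ and $\nu_{\mathrm{Kad}}$ are exactly this canonical expression specialised to the respective coproducts \eqref{coring-CM} and multiplications. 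Hence any morphism already compatible with the underlying bialgebroid structure is automatically forced to intertwine the two Galois maps.

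Concretely I would argue in three steps. First, recall from \cite{pvo} that $\chi$ is an isomorphism of $A$-bialgebroids; in particular it is an algebra map, it satisfies $\chi\alpha_{\mathrm{Kad}}=\alpha_{\mathrm{CM}}$ and $\chi\beta_{\mathrm{Kad}}=\beta_{\mathrm{CM}}$, and it intertwines the two coproducts and counits. Second, because $\chi$ respects source and target, it descends to well-defined maps on the balanced tensor products $-\ot_{A^{op}}-$ and $-\ot_A-$ occurring in the domain and codomain of $\nu$. Third, for homogeneous elements $x,y$ of the Kadison $\times$-Hopf algebra I would compute
\[
(\chi\ot\chi)\,\nu_{\mathrm{Kad}}(x\ot_{A^{op}}y)
=\chi(x\ps{1})\ot_A\chi(x\ps{2}y)
=\chi(x)\ps{1}\ot_A\chi(x)\ps{2}\chi(y)
=\nu_{\mathrm{CM}}\big(\chi(x)\ot_{A^{op}}\chi(y)\big),
\]
where the first equality applies the canonical form of $\nu_{\mathrm{Kad}}$ and then $\chi\ot\chi$, while the second uses precisely that $\chi$ is multiplicative and comultiplicative. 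This is exactly the condition $\zeta\nu=\nu\zeta$ in the definition of an isomorphism of $\times$-Hopf algebras, and together with the bijectivity of $\chi$ (with explicit inverse $\chi^{-1}$) it completes the proof.

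The step demanding the most care is the second: one must verify that $\chi$ genuinely passes to the $A^{op}$- and $A$-balanced tensor products, which reduces to the source/target compatibilities recorded in the first step and to the fact that $\Delta(1)=1\ot_A 1$. The computation in the third step then carries no real difficulty, being a formal consequence of the bialgebroid axioms. I would emphasise one point in closing: unlike the B\"ohm--Szlach\'anyi Hopf algebroid statement in \cite{pvo}, no hypothesis $S^2=\Id$ is required here, since the $\times$-Hopf structure is encoded entirely by the bijectivity of the canonically determined $\nu$ rather than by an antipode, so the bialgebroid isomorphism $\chi$ already transports all the relevant structure.
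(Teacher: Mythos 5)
Your argument is correct and follows the route the paper intends: the proposition is stated there without a written proof, resting on the Panaite--Van Oystaeyen bialgebroid isomorphism $\chi$ of \eqref{isom1} together with the definition of isomorphism of $\times$-Hopf algebras recalled just before it, which is precisely what you verify. Your key observation---that $\zeta\nu=\nu\zeta$ is automatic for any bialgebroid isomorphism because $\nu$ is canonically assembled from the product and coproduct via $k\ot_{R^{op}}k'\mapsto k\ps{1}\ot_R k\ps{2}k'$ (as one checks against the displayed formulas for $\nu_{\mathrm{CM}}$ and $\nu_{\mathrm{Kad}}$)---is the right way to make the paper's implicit ``similarly one has the following'' precise, and it correctly explains why no hypothesis $S^2=\Id$ is needed at the $\times$-Hopf algebra level, unlike for the B\"ohm--Szlach\'anyi Hopf algebroid statement.
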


Since the Connes-Moscovici and Kadison $\times$-Hopf algebras are isomorphic their cyclic cohomology are also  isomorphic.


\end{document}